\theoremstyle{theorem}
\newtheorem{theorem}{Theorem}
\theoremstyle{definition}
\begin{document}
	
	\title{Arrow-chasing in Pascal's triangle -- Visual proofs for summation formulas involving binomial coefficients}
	\author{Regula Krapf}  
%
	
	\maketitle
	
	\begin{abstract}
		This article demonstrates, using numerous examples of varying complexity, how one can visually prove summation formulas involving binomial coefficients by exclusively using the recurrence relation for binomial coefficients and its illustration through arrows in Pascal's triangle. The method developed for this purpose, which we call `arrow-chasing', is elementary and it is accessible to a very broad audience.
	\end{abstract}
	
	Binomial coefficients play an important role in combinatorics and in probability theory. Although today the standard definition involves factorials, 
	historically, binomial coefficients were first considered as entries in the so-called \textit{Pascal triangle}. Pascal's triangle was known in many cultures, e.g., by the Persian mathematician Al-Karaji (953--1029), by the Chinese mathematician Yang Hui (1238--1298) and by the European mathematician Jordanus de Nemore (fl. 13th century). Binomial coefficients can thus be defined as numbers $\binom  nk$ for $0\leq k\leq n$ which satisfy the following recurrence relation, called \textit{Pascal's rule}:
	\begin{align*}
		\binom{n}k&=\binom {n-1}k+\binom {n-1}{k-1}\text{ with }\binom n0=\binom nn=1
	\end{align*}
	In addition, we define $\binom nk$ to be $0$ for $k>n$ and for $k<0$.
	It is a standard exercise to show that our definition is equivalent to the one involving factorials and, moreover, that $\binom nk$ counts the number of ways to choose $k$ out of $n$ objects.
	Using Pascal's rule (see Figure~\ref{fig:Pascal rule}) one obtains Pascal's triangle by placing $1$s in the outermost entries of a number triangle, and then filling each inner entry with the sum of the two numbers diagonally above (see Figure~\ref{fig:Pascal triangle}). By construction, Pascal's triangle is symmetrical.

	\begin{figure}[h!]\centering	\begin{minipage}{0.3\textwidth}
		\vspace*{1.6cm}
			
	\begin{tikzpicture}[scale=1.3]
		\tikzset{hexagon/.style={draw, regular polygon, regular polygon sides=6, minimum size=1.3cm, inner sep=0pt, rotate=30}}
		\drawHexNode00{white}\drawHexNode01{white}\drawHexNode11{white}
		\drawhexarrowsmult00011
		\drawhexarrowsmult01101
		\hextext00{\binom {n-1}{k-1}}\hextext01{\binom {n-1}k}
		\hextext11{\binom nk}
	\end{tikzpicture}
	\caption{\label{fig:Pascal rule}Pascal's rule}

	\end{minipage}\quad
	\begin{minipage}{0.45\textwidth}
		\begin{tikzpicture}[scale=0.8]
			\tikzset{hexagon/.style={draw, regular polygon, regular polygon sides=6, minimum size=0.8cm, inner sep=0pt, rotate=30}}
			\foreach \i in {0,1,2,3,4}{\drawHexNode4{\i}{black!10}}
			\foreach \i in {2,3,5}{\drawHexNode{\i}2{black!30}}
			\drawHexNode42{black!22}
		\drawPascalTriangle5
		\hextext00{\binom00}\hextext10{\binom10}\hextext11{\binom11}\hextext20{\binom20}\hextext21{\binom21}\hextext22{\binom22}
		\hextext30{\binom30}\hextext31{\binom31}\hextext32{\binom32}\hextext33{\binom33}
		\hextext40{\binom40}\hextext41{\binom41}\hextext42{\binom42}\hextext43{\binom43}\hextext44{\binom44}
		\hextext50{\binom50}\hextext51{\binom51}\hextext52{\binom52}\hextext53{\binom53}\hextext54{\binom54}\hextext55{\binom55}
	\end{tikzpicture}\caption{\label{fig:Pascal triangle}Pascal's triangle}
	\end{minipage}
\end{figure}

In the following, we prove various binomial identities using only Pascal's rule and its visualization in Pascal's triangle -- a method we call \textit{arrow-chasing}.

\section{Sums of rows and columns}
We begin with summation formulas for rows and finite parts of columns.  
The \textit{$n$-th row} of Pascal's triangle is the set of entries of the form $\binom{n}{k}$ for $k\in\{0, 1, \ldots, n\}$ (see Figure~\ref{fig:Pascal triangle}, marked light gray).  
Similarly, the \textit{$n$-th column} of Pascal's triangle consists of entries $\binom{n}{k}$ for $n\in\{k, k+1, \ldots\}$ (see Figure~\ref{fig:Pascal triangle}, marked dark gray).

	\begin{theorem}\label{thm:row sum}
	For each $n\in\mathbb N$ we have:
	$$\sum_{k=0}^n\binom nk=2^n$$
\end{theorem}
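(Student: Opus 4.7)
My plan is to argue by induction on $n$, realising the inductive step as a single round of arrow-chasing in Pascal's triangle. The base case $n=0$ is immediate, since $\binom{0}{0}=1=2^{0}$.

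For the inductive step, I would apply Pascal's rule to every entry of row $n$ simultaneously, drawing the two up-going arrows from each hexagon $\binom{n}{k}$ to its parents $\binom{n-1}{k-1}$ and $\binom{n-1}{k}$ as in Figure~\ref{fig:Pascal rule}. Summing the resulting identities
\[
\binom{n}{k}=\binom{n-1}{k-1}+\binom{n-1}{k}
\]
over $k=0,1,\dots,n$ rewrites the row-$n$ sum as a weighted sum over the entries of row $n-1$, where the weight of each entry is the number of arrows arriving at it. The key visual observation is that every entry $\binom{n-1}{j}$ with $0\leq j\leq n-1$ is hit by exactly two arrows: one from $\binom{n}{j}$ (contributing the summand $\binom{n-1}{k}$ with $k=j$) and one from $\binom{n}{j+1}$ (contributing the summand $\binom{n-1}{k-1}$ with $k=j+1$). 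Hence the row-$n$ sum equals twice the row-$(n-1)$ sum, and the induction hypothesis finishes the proof.

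The main (and only) obstacle is a boundary bookkeeping issue: the arrow from $\binom{n}{0}$ pointing to $\binom{n-1}{-1}$ and the one from $\binom{n}{n}$ pointing to $\binom{n-1}{n}$ leave the triangle. This is handled cleanly by the convention $\binom{n-1}{k}=0$ for $k<0$ or $k>n-1$, so the stray arrows carry weight zero and the doubling is exact. Visually, this is the entire content of the proof: every entry of row $n-1$ is covered by precisely two arrows from row $n$, which is why the row sums satisfy $S_n=2S_{n-1}$.
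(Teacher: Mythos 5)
Your proof is correct and is essentially the paper's own argument: the paper also doubles the row-$(n-1)$ sum by observing that each entry above receives exactly two arrows (see Figure~\ref{fig:row sum}), with the induction left implicit. Your explicit handling of the boundary via the convention $\binom{n-1}{k}=0$ for $k<0$ or $k>n-1$ is a careful touch, but it is the same proof.
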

In Pascal's triangle, this means that the sum of the entries in the $n$-th row is given by $2^n$. Standard proofs of this identity use the binomial theorem, algebraic manipulations or combinatorial arguments. However, one can directly prove it using Pascal's triangle.

	\begin{figure}[h!]\centering
		\begin{tikzpicture}[scale=1]  
			\tikzset{hexagon/.style={draw, regular polygon, regular polygon sides=6, minimum size=1cm, inner sep=0pt, rotate=30}}
			\drawPascalTrap{5}{6}
			\foreach \i in {0,1,...,6}{\drawHexNode6{\i}{black!10}}
			\foreach \i in {0,1,2,3,4,5}{\drawhexarrowsmult5{\i}111}
		\end{tikzpicture}\caption{Row sum in Pascal's triangle\label{fig:row sum}}\end{figure}
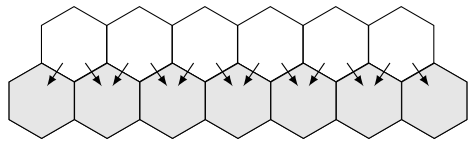
	
		\begin{proof}[Proof of Theorem \ref{thm:row sum}]
		We interpret the left-hand side as the sum of all entries in the $n$-th row of Pascal's triangle. In order to obtain that row sum, we add each entry in the row above twice (see Figure \ref{fig:row sum}). Hence, the row sum is twice the previous row sum. Since the row sum in row $0$ is $1=2^0$, this proves the claim.
	\end{proof}
	
		Note that our proof of Theorem~\ref{thm:row sum} implicitly uses induction. However, compared to the standard induction proof using algebraic manipulations, this visual argument is much easier to follow and it provides more insight into the question \textit{why} this identity holds.	Moreover, it is accessible to a much broader audience\footnote{In fact, the author discussed some of the arguments presented in this paper with primary school children who even managed to come up with their own arrow-chasing proofs.}.
	Almost the same proof can be used to show that if we only add every second entry in the $n$-th row of Pascal's triangle we obtain exactly the sum of all entries in the row above, which is equal to $2^{n-1}$ by Theorem~\ref{thm:row sum} (see Figure \ref{fig:half row sum}). This holds whether we sum only the even-indexed entries (left) or only the odd-indexed ones (right), yielding:
	
	$$\sum\limits_{k\geq0}\binom n{2k}=\sum\limits_{k\geq0}\binom n{2k+1}=2^{n-1}$$
	
	\begin{figure}[h!]\centering
		\begin{tikzpicture}[scale=1]  
			\tikzset{hexagon/.style={draw, regular polygon, regular polygon sides=6, minimum size=1cm, inner sep=0pt, rotate=30}}
			\drawPascalTrap56
			\foreach \i in {0,2,4,6}{
				\drawHexNode6{\i}{black!10}
			}
			\foreach \i in {0,2,4}{\drawhexarrowsmult5{\i}101\drawhexarrowsmult5{\i+1}011}
		\end{tikzpicture}\;\;
		\begin{tikzpicture}\tikzset{hexagon/.style={draw, regular polygon, regular polygon sides=6, minimum size=1cm, inner sep=0pt, rotate=30}}
			\drawPascalTrap56
			\foreach \i in {1,3,5}{\drawHexNode6{\i}{black!10}\drawhexarrowsmult5{\i-1}011\drawhexarrowsmult5{\i}101}
		\end{tikzpicture}
		\caption{Sum of half of the entries in a row\label{fig:half row sum}}
	\end{figure}
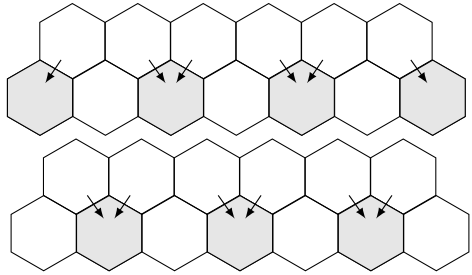

	While these particular proofs are not new (they have previously been published by \'Angel Plaza in \cite{MR3654845} and \cite{MR4153216}), the method of drawing arrows in Pascal's triangle -- which we denote by \textit{arrow-chasing} -- offers a powerful tool for discovering a wide range of new proofs of identities involving binomial coefficients.
	
	Next, we consider sums of entries in the same column of Pascal's triangle. This yields the so-called \textit{hockey stick identity}:
	
		\begin{theorem}[Hockey stick identity]\label{thm:hockey stick}
		For all $n,m\in\mathbb N$ such that $m\leq n$ we have:
		$$\sum\limits_{k=m}^n\binom km=\binom{n+1}{m+1}$$	
	\end{theorem}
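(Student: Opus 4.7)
The plan is to prove the hockey stick identity by an arrow-chasing argument that mirrors, at the level of arrows, the induction on $n \geq m$. The geometric picture I would aim for is a column of entries $\binom{m}{m}, \binom{m+1}{m}, \ldots, \binom{n}{m}$ together with arrows that successively feed them into the $(m+1)$-st column, ultimately accumulating at $\binom{n+1}{m+1}$; the overall shape of the highlighted cells is exactly the hockey stick for which the identity is named.

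Concretely, I would start from the head $\binom{n+1}{m+1}$ and apply Pascal's rule to write it as $\binom{n}{m+1} + \binom{n}{m}$. The first summand lies one step up in the same column, so I apply Pascal's rule to it as well, obtaining $\binom{n-1}{m+1} + \binom{n-1}{m} + \binom{n}{m}$. Iterating this arrow-chasing up along the $(m+1)$-st column peels off the terms $\binom{n}{m}, \binom{n-1}{m}, \ldots, \binom{m+1}{m}$ in succession. The cascade terminates at $\binom{m+1}{m+1} = 1 = \binom{m}{m}$, which supplies the final term of the sum and closes the stick at its handle.

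Equivalently (and this is what the picture really displays), one can run the argument bottom-up: the base case $n = m$ is just $\binom{m}{m} = \binom{m+1}{m+1}$, which sits at the corner where handle meets head; for the inductive step, a single application of Pascal's rule gives
\[
\binom{n+1}{m+1} + \binom{n+1}{m} = \binom{n+2}{m+1},
\]
so adding one more column entry $\binom{n+1}{m}$ moves the running total one step diagonally down the $(m+1)$-st column, extending the stick by one cell. This is a single arrow-chase per inductive step, of the same flavor as in the proof of Theorem~\ref{thm:row sum}.

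I do not expect a conceptual obstacle: the identity is literally one use of Pascal's rule per inductive step. The real challenge is presentational, namely to draw the figure so that the reader sees both the column being summed and the chain of arrows that collapses the partial sums $\binom{m+1}{m+1}, \binom{m+2}{m+1}, \ldots, \binom{n+1}{m+1}$ along the adjacent column, with the base identification $\binom{m}{m} = \binom{m+1}{m+1}$ visible as the pivot of the stick.
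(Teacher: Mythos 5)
Your proposal is correct and takes essentially the same approach as the paper: the bottom-up version you describe---anchoring at $\binom{m}{m}=\binom{m+1}{m+1}$ and using one application of Pascal's rule per step to slide the running total down the $(m+1)$-st column---is precisely the paper's argument as shown in its figure. Your top-down peeling from $\binom{n+1}{m+1}$ is the same telescoping chain of arrows read in the opposite direction, not a genuinely different route.
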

	
	\begin{proof}
		As displayed in Figure~\ref{fig:hockey stick}, we can interpret $\sum_{k=m}^n\binom km$ as the sum of the entries in the $m$-th column in Pascal's triangle from the $m$-th to the $n$-th row (marked light gray). If we add these entries by successively using Pascal's rule and the fact that $\binom mm=1=\binom{m+1}{m+1}$, we obtain $\binom{n+1}{m+1}$ (marked dark gray).
	\end{proof}
	
	\begin{figure}[h!]\centering
	\begin{tikzpicture}
		\drawPascalTriangle6\drawHexNode63{black!30}
		\foreach \i in {2,3,4,5}{\drawHexNode{\i}2{black!10}\drawhexarrowsmult{\i}2011}
		\drawhexarrowsmult33101\drawhexarrowsmult43101\drawhexarrowsmult53101
		\hextext63{\binom{n+1}{m+1}}\hextext22{\binom mm}\hextext32{\binom{m+1}m}\hextext42{\raisebox{0.2cm}{\text{$\iddots$}}}\hextext52{\binom nm}
	\end{tikzpicture}\caption{Proof of the hockey stick identity \label{fig:hockey stick}}
\end{figure}
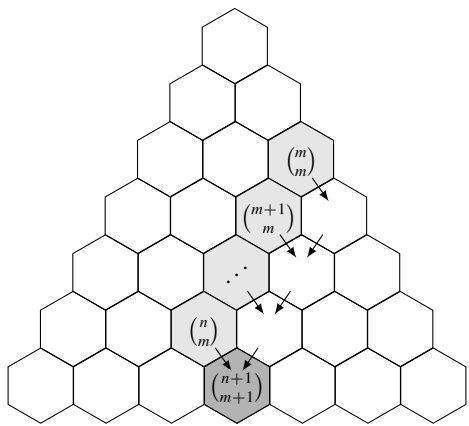

	
In standard literature, the hockey stick identity is often illustrated (without arrows) as a hockey stick shape in Pascal's triangle. Yet, proofs typically rely on induction, algebraic manipulation, or combinatorial arguments. 
In contrast to Theorem~\ref{thm:row sum}, this diagram captures the whole identity at once: the arrows and dots show that every term is accounted for in a single visual sweep. Moreover, this approach offers the advantage of conveying the result in an intuitive visual way, making it accessible without formal inductive reasoning.



\section{Weighted sums of rows}	
In the following, we will consider weighted sums of binomial coefficients in a row. Simple examples of such weighted sums include
$$\sum\limits_{k=0}^nk\cdot\binom nk\quad\text{or}\quad\sum\limits_{k=0}^n(-1)^k\binom nk.$$
At this point, we move beyond familiar territory.
First, note that in Figure~\ref{fig:row sum}, it was not necessary to write the values of the binomial coefficients into the corresponding cells. This works because each value is uniquely determined by its position in Pascal’s triangle. This observation allows us to place the weight of each binomial coefficient into the now-empty cells instead.


Second, unlike the downward-moving Pascal rule, when dealing with weights in a weighted sum, the process is reversed -- we move {upward} in the triangle.
To formalize this, we introduce the \textit{weight rule}: Given a weighted row sum $\sum_{k=0}^n a_k \binom{n}{k}$, we can compute weights for the row above ($n-1$) so that the total weighted sum stays unchanged.  
According to the weight rule, each weight in the upper row equals the sum of the two adjacent weights directly below it (see Figure~\ref{fig:weight rule}). This mirrors Pascal' rule, but in reverse. If we use the weight rule, we change the orientation of the arrows in order to emphasize that we work upward.
\begin{figure}[h!]\centering
	\begin{tikzpicture}[scale=1.4]
		\tikzset{hexagon/.style={draw, regular polygon, regular polygon sides=6, minimum size=1.4cm, inner sep=0pt, rotate=30}}
		\drawPascalTrap45
		\foreach \i/\j in {0/1,1/2,2/3,3/4,4/5}{
			\drawhexarrowsmultrev4{\i}111
			\hextext4{\i}{a_\i+a_\j}
			\hextext5{\i}{a_\i}
		}
		\hextext55{a_5}
	\end{tikzpicture}
	\caption{Weight rule\label{fig:weight rule}}
\end{figure}
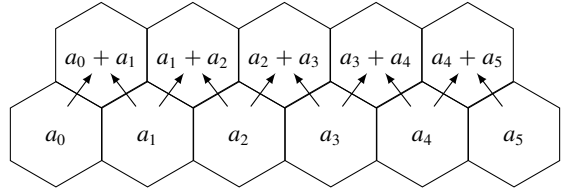
Note that -- in contrast to Pascal’s rule -- the weight rule must always be applied to the entire row. If a particular entry is not meant to contribute to the sum, we can simply assign it a weight of $0$ and leave the cell empty.

Now, we are ready to present some examples of weighted row sums. We start with the following well-known theorem:

	\begin{theorem}\label{thm:weighted binomial sum}
	For  every $n\in\mathbb N$ we have:
	$$\sum\limits_{k=0}^nk\cdot\binom nk=n\cdot2^{n-1}$$
\end{theorem}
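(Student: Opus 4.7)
The plan is to apply the weight rule to the left-hand side. I place the weights $a_k=k$ in row $n$ of Pascal's triangle; one application of the weight rule transforms them into weights $a_j+a_{j+1}=j+(j+1)=2j+1$ in row $n-1$, i.e., the odd numbers $1,3,5,\ldots,2n-1$, while preserving the weighted sum.

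The next step is to evaluate the new weighted sum $\sum_{j=0}^{n-1}(2j+1)\binom{n-1}{j}$ using Theorem~\ref{thm:row sum}. The cleanest route I see is to invoke the symmetry of Pascal's triangle: since $\binom{n-1}{j}=\binom{n-1}{n-1-j}$, reading the weights from right to left leaves the sum unchanged, and adding the original sequence to its reverse produces the constant pair-sum $(2j+1)+(2(n-1-j)+1)=2n$ at every position. Hence twice the sum equals $2n\cdot 2^{n-1}=n\cdot 2^n$ by Theorem~\ref{thm:row sum}, which gives the result after dividing by two.

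A purely arrow-chasing alternative is to iterate the weight rule all the way up to row $0$: a short induction shows that in row $n-r$ the weight at position $j$ takes the form $2^r j+r\cdot 2^{r-1}$, so the unique entry at the apex carries weight $n\cdot 2^{n-1}$, which is the whole weighted sum.

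I expect the main conceptual hurdle to be noticing that a single application of the weight rule already produces an arithmetic progression whose symmetric pairs sum to a constant -- essentially Gauss's classical trick for $1+2+\cdots+n$, now enacted visually inside Pascal's triangle. Everything else is bookkeeping: the weight rule guarantees the first equality, and Theorem~\ref{thm:row sum} closes out the second.
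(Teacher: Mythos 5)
Your main argument is correct and is essentially the paper's own proof: one application of the weight rule turns the weights $k$ into $2j+1$ in row $n-1$, and the symmetry $\binom{n-1}{j}=\binom{n-1}{n-1-j}$ then finishes the job via Theorem~\ref{thm:row sum}. The only cosmetic difference is that the paper swaps the half-weights $k+(k+1)$ and $(n-1-k)+(n-k)$ so that every cell carries weight exactly $n$, whereas you add the weight sequence to its reverse and divide by two --- the same Gauss-type pairing in a slightly different dress. Your purely arrow-chasing alternative is also correct (the induction giving weight $2^r j + r\cdot 2^{r-1}$ in row $n-r$ checks out, and at the apex this is $n\cdot 2^{n-1}$) and is a genuinely different route: it avoids the symmetry argument entirely, at the cost of an explicit formula-based induction that is self-contained but less visual than the single swap the paper uses.
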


\begin{proof}
We interpret the left-hand side as a weighted row sum in the $n$-th row of Pascal's triangle (see Figure~\ref{fig:weighted row sum}, left), where the binomial coefficient $\binom nk$ is equipped with weight $k$. Using the weight rule we add adjacent weights in order to obtain a weighted sum in the $(n-1)$-st row with weights $k+(k+1)$. Now, we use the symmetry of Pascal's triangle and we rearrange the weights of equal binomial coefficients as depicted in Figure~\ref{fig:weighted row sum}, right): The binomial coefficients $\binom {n-1}k$ and $\binom 
{n-1}{n-1-k}$ have weights $k+(k+1)$ and $(n-1-k)+(n-k)$. By symmetry we can swap the weights in order to obtain $k+(n-k)=n$ and $(k+1)+(n-1-k)=n$. Then every entry has weight $n$, so we get $n$ times the row sum of the $(n-1)$-st row, which is $n\cdot2^{n-1}$ by Theorem~\ref{thm:row sum}.
\end{proof}

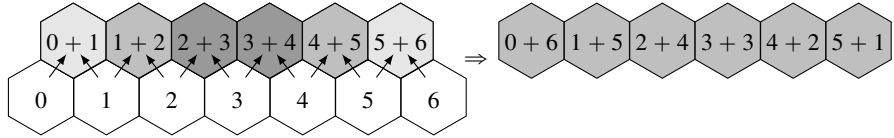
\begin{figure}[h!]\centering
	\begin{tikzpicture}
		\drawPascalTrap56
	\drawHexNode50{black!10}\drawHexNode55{black!10}\drawHexNode51{black!25}\drawHexNode54{black!25}\drawHexNode53{black!40}\drawHexNode52{black!40}
			\foreach \i/\j in {0/1,1/2,2/3,3/4,4/5,5/5}{
			\drawhexarrowsmultrev5{\i}111}
			\foreach \i in {0,1,2,3,4,5,6}{\hextext6{\i}{\i}}
			\hextext50{0+1}\hextext51{1+2}\hextext52{2+3}\hextext53{3+4}\hextext54{4+5}\hextext55{5+6}
			\hextext{5.3}{6.3}{\Rightarrow}
			\foreach \i in {7,8,9,10,11,12}{\drawHexNode5{\i}{black!25}}
			\hextext57{0+6}\hextext58{1+5}\hextext59{2+4}\hextext5{10}{3+3}\hextext5{11}{4+2}\hextext5{12}{5+1}
	\end{tikzpicture}\caption{Proof of Theorem~\ref{thm:weighted binomial sum}\label{fig:weighted row sum} for $n=6$}
\end{figure}

We can also find formulas for weighted sums where we only add every second entry of each row:

	\begin{theorem}\label{thm:weighted row sum 2}
	For each $k\in\mathbb N$ we have:
	\begin{align*}
		\sum\limits_{k\geq0}k\cdot\binom n{2k}=n\cdot2^{n-3}\quad\text{and}\quad \sum\limits_{k\geq0}k\cdot\binom n{2k+1}=(n-2)\cdot2^{n-3}
	\end{align*}
\end{theorem}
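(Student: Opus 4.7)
The plan is to apply the weight rule twice so that the irregular weights in row $n$ collapse into a simple linear pattern in row $n-2$, after which both identities reduce to Theorems~\ref{thm:row sum} and~\ref{thm:weighted binomial sum}. For the first sum I would assign the weight $k$ to $\binom{n}{2k}$ and $0$ to every binomial coefficient with odd lower index, so that the weights in row $n$ read $0,0,1,0,2,0,3,0,\ldots$. The first application of the weight rule fills in the odd cells, producing the row $n-1$ weights $0,1,1,2,2,3,3,\ldots$, and a second application collapses adjacent pairs into weight $i+1$ at position $i$ throughout row $n-2$ (indeed $k+(k+1)=2k+1$ at position $2k$ and $(k+1)+(k+1)=2k+2$ at position $2k+1$). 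From there
$$\sum_{i=0}^{n-2}(i+1)\binom{n-2}{i}=\sum_{i=0}^{n-2}i\binom{n-2}{i}+\sum_{i=0}^{n-2}\binom{n-2}{i}=(n-2)\cdot 2^{n-3}+2^{n-2}=n\cdot 2^{n-3},$$
by Theorems~\ref{thm:row sum} and~\ref{thm:weighted binomial sum}, establishing the first identity.

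For the second identity I would repeat the procedure with weights $k$ on $\binom{n}{2k+1}$ and zeros elsewhere, i.e.\ row $n$ weights $0,0,0,1,0,2,0,3,\ldots$. One application of the weight rule gives the row $n-1$ weights $0,0,1,1,2,2,3,3,\ldots$, and a second application yields exactly weight $i$ at position $i$ in row $n-2$ (since $k+k=2k$ at position $2k$ and $k+(k+1)=2k+1$ at position $2k+1$). Theorem~\ref{thm:weighted binomial sum} then directly gives $(n-2)\cdot 2^{n-3}$.

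The main obstacle I anticipate is the bookkeeping of even- and odd-indexed cells across the two applications of the weight rule: each step is mechanical, but one must check that the collapse to a linear weight holds uniformly, without exceptional boundary terms at either end of the row. A figure showing both rule applications on a small concrete $n$, in the style of Figure~\ref{fig:weighted row sum}, would make the pattern visually obvious and give the cleanest presentation; unlike the proof of Theorem~\ref{thm:weighted binomial sum}, no symmetry rearrangement is needed here, since the weights $i$ and $i+1$ already yield the answer via known row sums.
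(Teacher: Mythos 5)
Your proposal is correct and follows essentially the same route as the paper: two applications of the weight rule carry the even- or odd-indexed weights of row $n$ down to row $n-2$, where they become $k+1$ (even case) or $k$ (odd case), and the identities then follow from Theorems~\ref{thm:row sum} and~\ref{thm:weighted binomial sum}. Your explicit verification of the intermediate row $n-1$ patterns and the boundary behaviour just makes precise what the paper conveys through its figure.
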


\begin{proof}
	We consider the $n$-th row of Pascal's triangle, where we add only the weighted even-indexed or the weighted odd-indexed entries (see Figure \ref{fig:weighted row sum 2}). 
	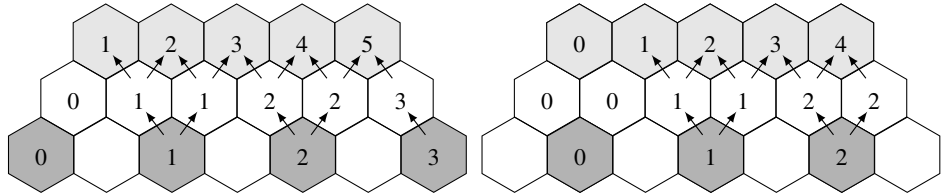
\begin{figure}[h!]
		\centering
		\begin{tikzpicture}
			\tikzset{hexagon/.style={draw, regular polygon, regular polygon sides=6, minimum size=1cm, inner sep=0pt, rotate=30}}
			\drawHexNode50{black!10}
			\foreach \i in {0,2,4,6}{\drawHexNode7{\i}{black!30}}
			\foreach \i in {1,2,3,4}{\drawHexNode5{\i}{black!10}\drawhexarrowsmultrev5{\i}111,\drawHexNode6{\i}{none}}
			\foreach \i/\j in {6/0,6/5,7/1,7/3,7/5}{\drawHexNode{\i}{\j}{none}}
			\foreach \i in {1,3,5}{\drawhexarrowsmultrev6{\i}011}
			\drawhexarrowsmultrev50011\drawhexarrowsmultrev62101\drawhexarrowsmultrev64101
			\hextext70{0}\hextext72{1}\hextext74{2}\hextext76{3}
			\hextext60{0}\hextext61{1}\hextext62{1}\hextext63{2}\hextext64{2}\hextext65{3}\hextext50{1}\hextext51{2}\hextext52{3}\hextext53{4}\hextext54{5}
		\end{tikzpicture}\;\;\begin{tikzpicture}
			\tikzset{hexagon/.style={draw, regular polygon, regular polygon sides=6, minimum size=1cm, inner sep=0pt, rotate=30}}
			\drawPascalTrap{6}{4}
			\drawHexNode40{black!10}
			\foreach \i in {1,2,3,4} {\drawHexNode4{\i}{black!10}}
			\foreach \i in {1,3,5} {\drawHexNode6{\i}{black!30}}
			\foreach \i/\j in {4/2,4/3,4/4}{\drawhexarrowsmultrev{\i}{\j}111}\drawhexarrowsmultrev41011
			\drawhexarrowsmultrev52011\drawhexarrowsmultrev53101\drawhexarrowsmultrev54011\drawhexarrowsmultrev55101
			\hextext61{0}\hextext63{1}\hextext65{2}
			\hextext50{0}\hextext51{0}\hextext52{1}\hextext53{1}\hextext54{2}\hextext55{2}
			\hextext40{0}\hextext41{1}\hextext42{2}\hextext43{3}\hextext44{4}
		\end{tikzpicture}\caption{Proof of Theorem \ref{thm:weighted row sum 2}\label{fig:weighted row sum 2}}
	\end{figure}
	Using the weight rule, we move to row $n-2$.
	The second identity then clearly follows from Theorem \ref{thm:weighted binomial sum} for $n-2$ since we obtain weight $k$ in the $k$-th entry of the $(n-2)$-nd row. The weighted sum of the even-indexed entries of the $n$-th row is similar, but instead we obtain weight $k+1$ in the $k$-th entry of the $(n-2)$-nd row. Hence, we obtain the same sum as in the second identity but we additionally have to add the row sum in the $(n-2)$-nd row, which equals $2^{n-2}$ by Theorem~\ref{thm:row sum}. Therefore, in total we obtain $(n-2)\cdot2^{n-3}+2^{n-2}=n\cdot2^{n-3}$, which corresponds to the right-hand side.
\end{proof}

Next, we choose binomial coefficients themselves as weights:

	\begin{theorem}[Lagrange's identity]\label{thm:Lagrange}
	For every $n\in\mathbb N$ we have:
	$$\sum\limits_{k=1}^n\binom nk^2=\binom{2n}n$$
\end{theorem}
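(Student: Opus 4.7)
The plan is to interpret the left-hand side $\sum_{k=0}^n \binom{n}{k}^2 = \sum_{k=0}^n \binom{n}{k}\cdot\binom{n}{k}$ as a weighted row sum of the $n$-th row, in which the entry $\binom{n}{k}$ carries the weight $\binom{n}{k}$ itself. I then apply the weight rule $n$ times in succession, translating this weighted sum upward through rows $n-1, n-2, \ldots, 0$. Since the weight rule preserves the weighted sum at every step, the weighted sum of row $0$ is still equal to $\sum_{k=0}^n \binom{n}{k}^2$; but row $0$ contains only the single entry $\binom{0}{0}=1$, so the entire proof reduces to identifying the lone remaining weight at the apex of the triangle.

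The engine of the argument is a pleasing self-similarity: the weight rule combines two adjacent weights by the same additive law with which Pascal's rule combines two adjacent binomial coefficients. Consequently, two neighbouring weights $\binom{n}{k}$ and $\binom{n}{k+1}$ collapse to $\binom{n}{k}+\binom{n}{k+1} = \binom{n+1}{k+1}$. More generally, I would verify by induction on $j$ that after $j$ applications of the weight rule the weight at position $k$ of row $n-j$ equals $\binom{n+j}{k+j}$, the inductive step being just $\binom{n+j}{k+j}+\binom{n+j}{k+j+1}=\binom{n+j+1}{k+j+1}$ by Pascal's rule applied to the weights. The accompanying figure would then depict successive rows of binomial coefficients drifting upward through the triangle as weights, telescoping into one coefficient at the very top.

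Taking $j=n$, the unique position $k=0$ of row $0$ carries weight $\binom{2n}{n}$, so the preserved weighted row sum equals $\binom{2n}{n}\cdot 1 = \binom{2n}{n}$, which is the right-hand side. (Strictly the theorem's sum is written starting at $k=1$; since $\binom{n}{0}^2=1$, the version that arrow-chasing produces naturally is $\sum_{k=0}^n \binom{n}{k}^2 = \binom{2n}{n}$, from which the stated identity follows up to this one boundary term.)

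The main obstacle, really the only moment requiring insight, is noticing that the weight rule applied to a row of binomial coefficients immediately yields another row of binomial coefficients, so that Pascal's rule is in effect running on two levels simultaneously, once on the triangle's entries and once on the weights. Once this twofold recursion is spotted no further calculation is needed; the entire identity becomes visible in a single arrow-chasing picture.
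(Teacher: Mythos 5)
Your proof is correct, but it runs the arrow-chasing in the opposite direction from the paper. The paper starts from the \emph{right-hand side}: it places weight $1$ on the central entry $\binom{2n}{n}$ of row $2n$ and applies the weight rule upward for $n$ steps; the weights then grow into an inverted copy of Pascal's triangle (this is visually immediate, since it is literally the construction of Pascal's triangle), and upon reaching row $n$ each weight coincides with the entry $\binom{n}{k}$ beneath it, producing the left-hand side. You instead start from the \emph{left-hand side}: weights $\binom{n}{k}$ on row $n$, transported upward all the way to the apex, with the intermediate weights being shifted segments of rows $n+1,\dots,2n$ of Pascal's triangle, governed by your induction $\binom{n+j}{k+j}+\binom{n+j}{k+j+1}=\binom{n+j+1}{k+j+1}$. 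Both proofs rest on the same self-similarity -- the weight rule is Pascal's rule acting on the weights -- but they buy different conveniences: the paper's direction makes the intermediate picture self-evident (a single $1$ sprouting into Pascal's triangle) at the cost of having to \emph{recognize} that the weights match the entries when row $n$ is reached, whereas your direction requires the small explicit induction for the intermediate weights but telescopes to a trivial endpoint, since row $0$ has the single entry $\binom{0}{0}=1$ carrying the single weight $\binom{2n}{n}$. Your closing remark is also well taken: the lower limit $k=1$ in the theorem as stated is a typo (the identity as written would be off by $\binom{n}{0}^2=1$), and both your proof and the paper's actually establish $\sum_{k=0}^{n}\binom{n}{k}^2=\binom{2n}{n}$.
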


\begin{proof}The proof is depicted in Figure~\ref{fig:Lagrange 2} for $n=4$.
We begin with the right-hand side of the identity and interpret it as the central entry in the $(2n)$-th row of Pascal's triangle, which we assign a weight of $1$. Using the weight rule, we move upwards through Pascal's triangle toward the $n$-th row. Since each weight is obtained by summing the two weights directly below it, this process creates an inverted version of Pascal's triangle.
As we reach the $n$-th row, each weight aligns exactly with the value of the corresponding entry in that row. This gives us the left-hand side of the identity. Moreover, by the weight rule, the weighted sum remains constant throughout the rows and is therefore always equal to $\binom{2n}{n}$. Thus, the weighted sum in the $ n$-th row given by $\sum_{k=0}^n\binom nk^2$ is also equal to $\binom{2n}{n}$, completing the argument.

\end{proof}

	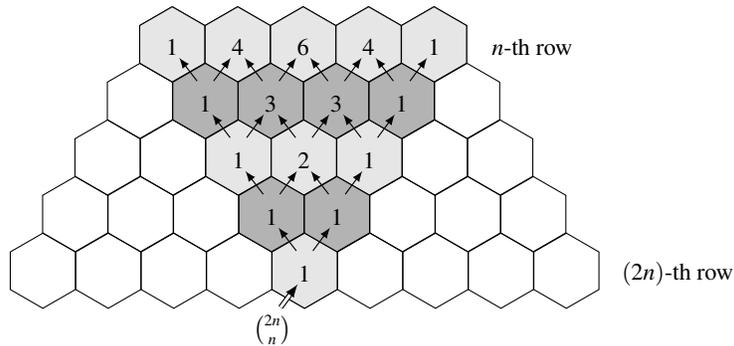
\begin{figure}[h!]\centering
	\begin{tikzpicture}
		\tikzset{hexagon/.style={draw, regular polygon, regular polygon sides=6, minimum size=1cm, inner sep=0pt, rotate=30}}
		
		\drawPascalTrap48
		\foreach \j in {0,1,...,4}{
			\drawHexNode{4}{\j}{black!10}
		}
		\foreach \i/\j in {6/2,6/3,6/4,8/4}{
			\drawHexNode{\i}{\j}{black!10}
		}
		\foreach \i/\j in {5/1,5/2,5/3,5/4,7/3,7/4}{
			\drawHexNode{\i}{\j}{black!30}}
		\foreach \i in {4,5,6,7}{
			\drawhexarrowsmultrev{\i}{\i-4}{0}{1}{1}
		}
		\foreach \i in {4,5,6,7}{
			\drawhexarrowsmultrev{\i}{4}{1}{0}{1}
		}
		\drawhexarrowsmultrev{6}{3}{1}{1}{1}
		\drawhexarrowsmultrev{5}{2}{1}{0}{1}
		\drawhexarrowsmultrev{5}{2}{0}{1}{1}
		\drawhexarrowsmultrev{5}{3}{1}{0}{1}
		\drawhexarrowsmultrev{5}{3}{0}{1}{1}
		\drawhexarrowsmultrev42111
		\drawhexarrowsmultrev41101
		\drawhexarrowsmultrev41011
		\drawhexarrowsmultrev43101
		\drawhexarrowsmultrev43011
		\hextext{8}{4}1	\hextext731
		\hextext741
		\hextext621
		\hextext632
		\hextext641
		\hextext511
		\hextext523
		\hextext533
		\hextext541
		\hextext401\hextext414\hextext426\hextext434\hextext441
		\hextext4{5.5}{\text{$n$-th row}}
		\hextext8{9.7}{\text{$(2n)$-th row}}
		\hextext94{\binom{2n}n}
		\drawhexarrowsmultrev84102
	\end{tikzpicture}\caption{Proof of Lagrange's identity\label{fig:Lagrange 2} for $n=4$}
\end{figure}

Essentially the same proof can be used to derive the more general formula called \textit{Chu-Vandermonde identity}. This identity is often named only after the French mathematician Alexandre-Th\'eophile Vandermonde (who published its proof in 1772), but it was already known in 1303 by the Chinese mathematician Zhu Shijie (see \cite{MR2768578}).

\begin{theorem}[Chu-Vandermonde identity]
	For $i,m,n\in\mathbb N$ such that $l\leq m,n$ we have:
	$$\sum_{k=0}^l\binom nk\cdot\binom m{l-k}=\binom{m+n}l$$
\end{theorem}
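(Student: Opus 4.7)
The plan is to adapt the arrow-chasing proof of Lagrange's identity almost verbatim, since Lagrange's identity is precisely the special case $m=n$, $l=n$. I would begin with the right-hand side, interpreting $\binom{m+n}{l}$ as a single entry in row $m+n$ of Pascal's triangle carrying weight $1$, while every other entry in that row carries weight $0$. The weighted row sum is then trivially $\binom{m+n}{l}$, and by the invariance of the weighted sum under the weight rule, this value is preserved throughout the argument.

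The key observation, already used in the Lagrange proof, is that starting from a single nonzero weight of $1$ and iterating the weight rule upward produces an inverted copy of Pascal's triangle of weights, because the weight rule is formally Pascal's rule in reverse. After $j$ such steps, one reaches row $m+n-j$, and the nonzero weights occupy the $j+1$ consecutive cells in columns $l-j, l-j+1,\ldots,l$; by induction they are exactly $\binom{j}{0},\binom{j}{1},\ldots,\binom{j}{j}$, i.e., the $j$-th row of Pascal's triangle laid along that diagonal strip. This step mirrors the Lagrange proof and requires no new idea, only a careful indexing.

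I would stop after precisely $m$ steps, arriving at row $n$. There, the entry $\binom{n}{k}$ sits in column $k$ and carries weight $\binom{m}{l-k}$, with the convention that this binomial vanishes whenever $l-k$ lies outside $\{0,1,\ldots,m\}$, since the inverted Pascal triangle of weights is anchored at column $l$. Comparing the weighted sum in row $n$ with the invariant $\binom{m+n}{l}$ then immediately gives
$$\binom{m+n}{l}=\sum_{k=0}^{l}\binom{n}{k}\binom{m}{l-k},$$
as required. I expect the main obstacle to be purely presentational: producing a figure analogous to Figure~\ref{fig:Lagrange 2} but with the starting entry $\binom{m+n}{l}$ now off the central axis, and labeling the inverted triangle of weights so that the reader sees at a glance that the weight on $\binom{n}{k}$ equals $\binom{m}{l-k}$, without needing to carry out the reindexing in formulas.
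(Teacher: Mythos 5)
Your proposal is correct and is precisely what the paper intends: the paper gives no separate proof for the Chu--Vandermonde identity, stating only that ``essentially the same proof'' as for Lagrange's identity (Theorem~\ref{thm:Lagrange}) applies, which is exactly your adaptation -- start with weight $1$ on $\binom{m+n}{l}$, run the weight rule upward $m$ steps to row $n$, and read off the inverted-Pascal-triangle weights $\binom{m}{l-k}$ on the entries $\binom{n}{k}$. Your indexing and the boundary convention $\binom{a}{b}=0$ for $b<0$ or $b>a$ are handled correctly, so nothing is missing.
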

The weight rule also applies to negative weights, allowing us to prove identities that involve alternating sums of binomial coefficients.

	\begin{theorem}\label{thm:alternating sum}
	For every $n\in\mathbb N$ with $n>0$ we have:
	$$\sum\limits_{k=0}^n(-1)^k\binom nk=0$$
\end{theorem}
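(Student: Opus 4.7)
The plan is to give a one-step arrow-chasing proof using the weight rule. I would set up the $n$-th row of Pascal's triangle with the alternating weights $a_k = (-1)^k$ for $k = 0, 1, \dots, n$, so that the left-hand side is precisely the weighted row sum. Since $n > 0$, there is a row above, namely the $(n-1)$-st row, into which we can push the weights.

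Next, I would apply the weight rule exactly once. By the rule, the weight on the entry $\binom{n-1}{k}$ in the $(n-1)$-st row equals the sum of the two adjacent weights directly below it, namely
\[
(-1)^k + (-1)^{k+1} = (-1)^k\bigl(1 + (-1)\bigr) = 0,
\]
for every $k \in \{0, 1, \dots, n-1\}$. Hence every weight in the $(n-1)$-st row vanishes, and so the weighted row sum in the $(n-1)$-st row is $0$. Since the weight rule preserves the weighted sum, the original sum in the $n$-th row must also equal $0$.

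Visually, this would be illustrated by a small figure showing the $n$-th row carrying alternating $+1$ and $-1$ weights, with arrows pointing upward into the $(n-1)$-st row whose cells all contain $0$. The only subtlety I anticipate is a cosmetic one, namely ensuring that the hypothesis $n > 0$ is used explicitly (so that a row above exists), and flagging that the assignment of weight $0$ to an entry means, per the convention introduced just after the weight rule, that the cell simply drops out of the sum. No case distinction on the parity of $n$ is needed, since the pairing of consecutive opposite signs works uniformly across the row.
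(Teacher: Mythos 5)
Your proposal is correct and matches the paper's own argument: both apply the weight rule once to push the alternating weights $(-1)^k$ from the $n$-th row up to the $(n-1)$-st row, observe that each resulting weight is $(-1)^k + (-1)^{k+1} = 0$, and conclude since the weight rule preserves the weighted sum. Your explicit remark that $n>0$ guarantees a row above exists is a small clarification not spelled out in the paper, but the approach is identical.
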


\begin{proof}
	According to the weight rule, each entry in the $(n-1)$-th row is both added and subtracted once (see Figure~\ref{fig:alternating sum}). This means that each entry in that row has weight $0$ and hence the weighted row sum is also equal to $0$. Since by construction the weighted sum in the $n$-th row is equal to that of the $(n-1)$-st row, the claim follows.
	\begin{figure}[h!]\centering
		\begin{tikzpicture}[scale=1.3]
			\tikzset{hexagon/.style={draw, regular polygon, regular polygon sides=6, minimum size=1.3cm, inner sep=0pt, rotate=30}}
			\foreach \i in {0,1,...,4}{
				\drawHexNode4{\i}{black!30}
			}
			\foreach \i in {0,1,...,5}{
				\drawHexNode5{\i}{black!10}
			}
			\drawhexarrowsmultrev40101
			\drawhexarrowsmultrev40011
			\drawhexarrowsmultrev41101
			\drawhexarrowsmultrev41011
			\drawhexarrowsmultrev42101
			\drawhexarrowsmultrev42011
			\drawhexarrowsmultrev43101
			\drawhexarrowsmultrev43011
			\drawhexarrowsmultrev44101
			\drawhexarrowsmultrev44011
			\hextext50{1}		
			\hextext51{-1}
			\hextext52{1}
			\hextext53{-1}
			\hextext54{1}
			\hextext55{-1}
			\hextext40{1-1}
			\hextext41{-1+1}
			\hextext42{1+-1}
			\hextext43{-1+1}
			\hextext44{1-1}
		\end{tikzpicture}\caption{Proof of Theorem \ref{thm:alternating sum}\label{fig:alternating sum}}
	\end{figure}
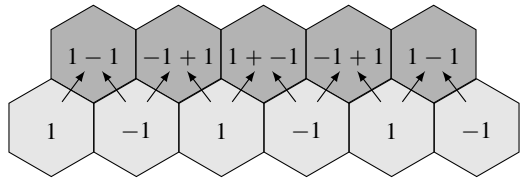
\end{proof}

	Essentially the same proof works also for the following formula which combines the weights considered in Theorems~\ref{thm:weighted binomial sum} and~\ref{thm:alternating sum}:
$$\sum_{k=0}^n(-1)^kk\binom nk=0.$$
If we add adjacent entries according to the weight rule, we obtain the weighted sum $\sum_{k=0}^{n-1}(-1)^{k+1}\binom{n-1}k$ in the row above, which is equal to $0$ by Theorem~\ref{thm:alternating sum}.

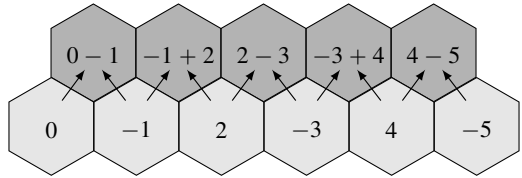
\begin{figure}[h!]\centering
	\begin{tikzpicture}[scale=1.3]
		\tikzset{hexagon/.style={draw, regular polygon, regular polygon sides=6, minimum size=1.3cm, inner sep=0pt, rotate=30}}
		\foreach \i in {0,1,...,4}{
			\drawHexNode4{\i}{black!30}
		}
		\foreach \i in {0,1,...,5}{
			\drawHexNode5{\i}{black!10}
		}
\foreach \i in {0,1,2,3,4}{\drawhexarrowsmultrev4{\i}111}

		\hextext50{0}		
		\hextext51{-1}
		\hextext52{2}
		\hextext53{-3}
		\hextext54{4}
		\hextext55{-5}
		\hextext40{0-1}
		\hextext41{-1+2}
		\hextext42{2-3}
		\hextext43{-3+4}
		\hextext44{4-5}
	\end{tikzpicture}\caption{Alternating weighted row sum\label{fig:alternating weighted row sum}}
\end{figure}

Note that the weights in Figures~\ref{fig:alternating weighted row sum} and \ref{fig:Alternating binomial row sum} correspond to the alternating values of the $0$-th and first column of Pascal's triangle. This suggests that we can generalize this further by replacing the weights $(-1)^k=(-1)^k\binom k0$ and $(-1)^k\cdot k=(-1)^k\binom k1$ by $(-1)^k\binom km$ for some $m\in\mathbb N$. In fact, we get the following general statement:

	\begin{theorem}\label{thm:Alternating binomial row sum}
	For natural numbers $m<n$ we have:
	$$\sum_{k=m}^n(-1)^k\binom nk\binom km=0$$
\end{theorem}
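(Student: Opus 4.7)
The plan is to apply the weight rule iteratively $m$ times, reducing the parameter $m$ by one at each step, until we arrive at a pure alternating row sum that vanishes by Theorem~\ref{thm:alternating sum}.

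I would interpret the left-hand side as a weighted row sum in the $n$-th row, where the entry $\binom{n}{k}$ carries the weight $a_k = (-1)^k\binom{k}{m}$ (note that $a_k = 0$ for $k < m$, so these cells are effectively empty, matching the summation range). Applying the weight rule once, the weight at position $k$ in row $n-1$ becomes
\begin{align*}
a_k + a_{k+1} &= (-1)^k\binom{k}{m} + (-1)^{k+1}\binom{k+1}{m} \\
&= (-1)^{k+1}\left(\binom{k+1}{m} - \binom{k}{m}\right) = (-1)^{k+1}\binom{k}{m-1},
\end{align*}
where in the last step I would invoke Pascal's rule in the form $\binom{k+1}{m} = \binom{k}{m} + \binom{k}{m-1}$. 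This is the same shape of weight as before, but with $m$ replaced by $m-1$ and with an extra sign flip.

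Iterating this $m$ times, the weights in row $n-m$ become $(-1)^{k+m}\binom{k}{0} = (-1)^{k+m}$, and the weighted row sum is preserved throughout. So the original sum equals
\[
\sum_{k=0}^{n-m}(-1)^{k+m}\binom{n-m}{k} = (-1)^m \sum_{k=0}^{n-m}(-1)^{k}\binom{n-m}{k},
\]
which vanishes by Theorem~\ref{thm:alternating sum} since $n-m > 0$. Visually, this is the arrow-chasing picture of Figure~\ref{fig:alternating sum} stacked $m$ times on top of the weight row, peeling off one factor of $\binom{k}{\cdot}$ per level.

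The main obstacle is the single algebraic step that converts $a_k + a_{k+1}$ into a weight of the same form with $m-1$ in place of $m$; once that collapse is checked, the argument is a clean induction on $m$ carried out visually by successive applications of the weight rule. A minor bookkeeping point is making sure the range of nonzero weights shrinks appropriately (from positions $\{m,\dots,n\}$ down to $\{0,\dots,n-m\}$) so that the final appeal to Theorem~\ref{thm:alternating sum} is applied to a genuine full row, but this is automatic from the structure of the weights.
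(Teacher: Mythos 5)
Your proposal is correct and follows essentially the same route as the paper's proof: interpreting the weights $(-1)^k\binom{k}{m}$ as alternating column entries, collapsing consecutive weights via Pascal's rule to get $(-1)^{k+1}\binom{k}{m-1}$, iterating $m$ times down to row $n-m$, and finishing with Theorem~\ref{thm:alternating sum}. Your explicit sign bookkeeping $(-1)^{k+m}$ is even slightly more careful than the paper's phrasing, but the argument is the same.
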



Figure~\ref{fig:Alternating binomial row sum} shows the case $n=8$ and $m=3$.

\begin{proof}
	We consider the $n$-th row of Pascal's triangle, starting at the entry $\binom nm$ (see Figure \ref{fig:Alternating binomial row sum}). Now, the weight of each binomial coefficient $\binom nk$ is given by $(-1)^k\binom km$. Thus, the weights correspond to the alternating entries of the $m$-th column. If we add two consecutive weights, we obtain by Pascal's rule:
	$$(-1)^k\binom km+(-1)^{k+1}\binom{k+1}m=(-1)^{k+1}\binom k{m-1}$$
	Hence, the sums of the weights in the row above correspond to the alternating entries of the $(m-1)$-th column, and so on. In the $(n-m)$-th row, the entries are therefore given by the alternating entries of the $0$-th column, which are $\pm1$. Thus, the left-hand side is equal to the alternating sum of the entries of the $(n-m)$-th row, which is $0$. 
\end{proof}
Interestingly, the weights in Figure~\ref{fig:Alternating binomial row sum} form part of a rotated Pascal triangle.

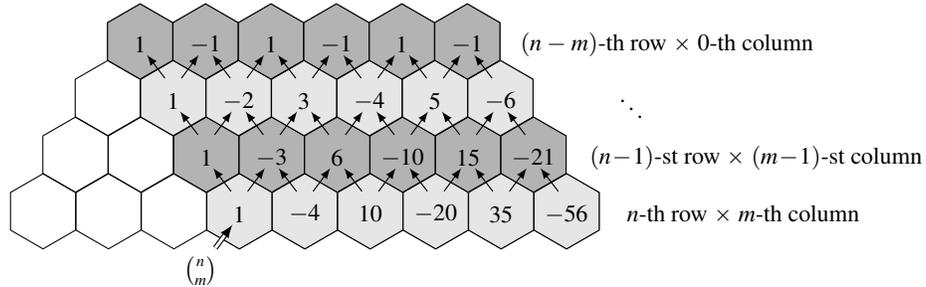
\begin{figure}[h!]\centering
	\begin{tikzpicture}
		\tikzset{hexagon/.style={draw, regular polygon, regular polygon sides=6, minimum size=1cm, inner sep=0pt, rotate=30}}
		\drawPascalTrap58
		\foreach \i in {2,3,4,5,6,7}{
			\drawHexNode7{\i}{black!30}}
		\foreach \i in {1,2,3,4,5,6}{
			\drawHexNode{6}{\i}{black!10}}
		\foreach \i in {0,1,2,3,4,5}{\drawHexNode{5}{\i}{black!30}}
		\foreach \i in {3,4,5,6,7,8}{\drawHexNode8{\i}{black!10}}
\foreach \i/\j in {5/1,5/2,5/3,5/4,5/5,6/2,6/3,6/4,6/5,6/6,7/3,7/4,7/5,7/6,7/7}{\drawhexarrowsmultrev{\i}{\j}111}\drawhexarrowsmultrev50011\drawhexarrowsmultrev61011\drawhexarrowsmultrev72011
		\hextext72{1}\hextext73{-3}\hextext74{6}\hextext75{-10}\hextext76{15}\hextext77{-21}
		\hextext61{1}\hextext62{-2}\hextext63{3}\hextext64{-4}\hextext65{5}\hextext66{-6}
		\hextext50{1}\hextext51{-1}\hextext52{1}\hextext53{-1}\hextext54{1}\hextext55{-1}
		\hextext831\hextext84{-4}\hextext85{10}\hextext86{-20}\hextext87{35}\hextext88{-56}
		\hextext5{8.05}{\text{$(n-m)$-th row $\times$ $0$-th column}}
		\hextext7{10.4}{\text{$(n\!-\!1)$-st row $\times$ $(m\!-\!1)$-st column}}
		\hextext8{10.7}{\text{$n$-th row $\times$ $m$-th column}}
		\hextext68{\ddots}
		\hextext9{2.9}{\binom nm}
		\drawhexarrowsmultrev83102
	\end{tikzpicture}\caption{Proof of Theorem \ref{thm:Alternating binomial row sum} for $n=8$ and $m=3$\label{fig:Alternating binomial row sum}}
\end{figure}

The weights are not restricted to integers. We can also prove identities involving weighted sums of binomial coefficients with rational, real or even complex weights. The most prominent example of such an identity is the binomial theorem.

	\begin{theorem}[Binomial Theorem]\label{thm:binomial theorem}
	For all complex numbers $a$ and $b$ we have:
	$$(a+b)^n=\sum\limits_{k=0}^n\binom nka^{n-k}b^k$$
\end{theorem}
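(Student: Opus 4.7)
The plan is to interpret the right-hand side $\sum_{k=0}^n\binom nk a^{n-k}b^k$ as a weighted row sum in the $n$-th row of Pascal's triangle, assigning to the entry $\binom nk$ the weight $a^{n-k}b^k$. Since the paper explicitly allows complex-valued weights, this setup is legal. I would then apply the weight rule repeatedly, climbing from row $n$ upward, and track how the weights evolve. Because the apex of Pascal's triangle is the single entry $\binom 00=1$, once we reach row $0$ the weighted sum collapses to a single weight, and by the invariance of the weighted sum under the weight rule this single weight must equal the original sum.

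The heart of the argument is to identify a multiplicative pattern preserved by the weight rule. Specifically, I claim that if the weights in row $m$ have the form $c\cdot a^{m-k}b^k$ for some constant $c$ independent of $k$, then after one application of the weight rule the weights in row $m-1$ have the form $c(a+b)\cdot a^{(m-1)-k}b^k$. This is immediate: at position $k$ of row $m-1$ the weight becomes
\[c\,a^{m-k}b^k+c\,a^{m-1-k}b^{k+1}=c(a+b)\,a^{(m-1)-k}b^k.\]
Starting from row $n$ with $c=1$ and iterating, a straightforward induction on the number of upward steps $j$ shows that the weights in row $n-j$ are given by $(a+b)^j\cdot a^{(n-j)-k}b^k$.

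Taking $j=n$, the unique entry in row $0$ carries weight $(a+b)^n\cdot a^0 b^0=(a+b)^n$. Since the weight rule preserves the weighted row sum at every stage, the original weighted sum in the $n$-th row equals $(a+b)^n\cdot\binom 00=(a+b)^n$, which is exactly the identity. I do not anticipate a real obstacle: the argument is bookkeeping once one spots the invariant, and the only delicate point is to articulate clearly that the pattern $c\cdot a^{m-k}b^k$ is reproduced verbatim (with $c$ scaled by $(a+b)$) at each upward step. A supporting figure analogous to Figure~\ref{fig:Lagrange 2}, showing an inverted sub-triangle whose cells are labelled by monomials $a^{\,\cdot\,}b^{\,\cdot\,}$ scaled by successive powers of $(a+b)$, would make the visual proof transparent and keep it in line with the style of the rest of the paper.
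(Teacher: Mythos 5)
Your proof is correct, and its algebraic heart is the same as the paper's: the one-step computation $c\,a^{m-k}b^k+c\,a^{m-1-k}b^{k+1}=c(a+b)\,a^{(m-1)-k}b^k$ together with the fact that the weight rule preserves the weighted row sum. The architecture, however, is genuinely different. The paper starts in the $(n+1)$-st row with weights $a^{n+1-k}b^k$, performs a \emph{single} application of the weight rule, factors $(a+b)$ out so that the weights in row $n$ become the pure monomials $a^{n-k}b^k$ again, and concludes that the weighted sum in row $n+1$ is $(a+b)$ times the weighted sum in row $n$; the theorem then follows by induction on $n$, in exact analogy with the proof of Theorem~\ref{thm:row sum} (the paper explicitly calls it a generalization of that proof). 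You instead fix $n$ and unroll everything inside one triangle: you climb from row $n$ to the apex, keeping the accumulated factor $(a+b)^j$ inside the weights, prove the weight pattern $(a+b)^j a^{(n-j)-k}b^k$ by an internal induction on the number of steps $j$, and read off $(a+b)^n$ from the single cell $\binom{0}{0}$. Your version is closer in spirit to the paper's proof of Lagrange's identity (Theorem~\ref{thm:Lagrange}): one sweep whose invariant weighted sum collapses to a single entry, so the whole identity is captured in one diagram rather than in a recurrence between instances of the theorem. What the paper's organization buys is a minimal figure and one weight computation per induction step; what yours buys is that the induction becomes a bookkeeping claim about the weight pattern inside a single picture, rather than an induction on the statement being proved -- a trade-off the paper itself highlights when contrasting the hockey stick diagram with the row-sum argument.
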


\begin{figure}[h!]
	\centering
	\begin{tikzpicture}[scale=1.8]
		\tikzset{hexagon/.style={draw, regular polygon, regular polygon sides=6, minimum size=1.8cm, inner sep=0pt, rotate=30}}
		\foreach \i in {0,1,...,4}{
			\drawHexNode4{\i}{black!10}
		}
		\foreach \i in {0,1,...,5}{
			\drawHexNode5{\i}{black!30}
		}
		\drawhexarrowsmultrev40101
		\drawhexarrowsmultrev40011
		\drawhexarrowsmultrev41101
		\drawhexarrowsmultrev41011
		\drawhexarrowsmultrev42101
		\drawhexarrowsmultrev42011
		\drawhexarrowsmultrev43101
		\drawhexarrowsmultrev43011
		\drawhexarrowsmultrev44101
		\drawhexarrowsmultrev44011
		\hextext40{\text{\tiny{$a^n(a\!+\!b)$}}}		
		\hextext41{\text{\tiny{$a^{n-1}b(a\!+\!b)$}}}
		\hextext42{\text{\tiny{$a^{n-2}b^2(a\!+\!b)$}}}
		\hextext43{\ldots}
		\hextext44{\text{\tiny{$b^n(a\!+\!b)$}}}
		\hextext50{a^{n+1}}
		\hextext51{a^nb}
		\hextext52{a^{n-1}b^2}
		\hextext53{\ldots}
		\hextext54{ab^n}\hextext55{b^{n+1}}
		\hextext5{6.25}{\text{$(n+1)$-st row}}
		\hextext4{5}{\text{$n$-th row}}
	\end{tikzpicture}
	\caption{Proof of the binomial theorem\label{fig:Binomial theorem}}
\end{figure}
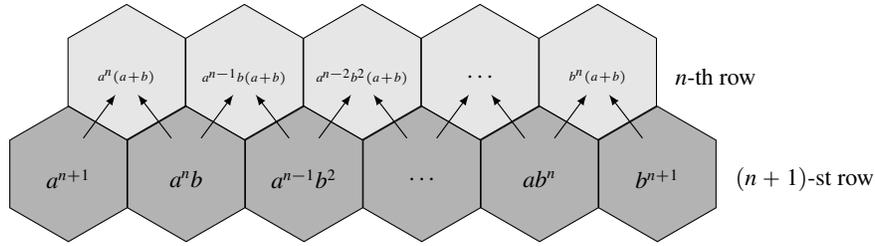

\begin{proof}
Let us consider the right-hand side in the $(n+1)$-st row of Pascal's triangle, equipped with weights $a^{n+1}, a^n b, \ldots, b^{n+1}$. Using the weight rule, we add adjacent weights and get $a^{n+1-k}b^k + a^{n-k}b^{k+1} = a^{n-k}b^k(a + b)$ in the $n$-th row. If we factor out $(a + b)$, the remaining weight sequence is $a^n, a^{n-1}b, \ldots, b^n$. Hence, the weighted row sum in the $(n+1)$-st row is $(a + b)$ times the previous row sum. Since the weighted row sum in row $0$ is $\binom{0}{0}a^0b^0 = 1 = (a + b)^0$, this proves the claim by induction.
\end{proof}

Note that this proof is simply a generalization of the proof of Theorem~\ref{thm:row sum}.



\section{Weighted sums of columns}
In this section, we will explore weighted sums of finite portions of columns in Pascal's triangle. If we choose our weights all to be $1$, we simply obtain the hockey stick rule. Now, let us consider other weight sequences. The easiest option is to choose weight $k$. This gives us the following variant of the hockey stick identity, which was shown in \cite{MR1390415}. However, our proof is a lot shorter than the one presented in \cite{MR1390415}.

\begin{theorem}\label{thm:Hockey variant}
	For all $n,m\in\mathbb N$ such that $m< n$ we have:
	$$\sum\limits_{k=m}^nk\binom km=n\binom{n+1}{m+1}-\binom{n+1}{m+2}$$
\end{theorem}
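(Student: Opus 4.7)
The plan is to decompose the weight $k$ on each entry $\binom{k}{m}$ of the $m$-th column (from row $m$ to row $n$) as $k = n-(n-k)$. This splits the weighted column sum as
\[\sum_{k=m}^n k\binom{k}{m} = n\sum_{k=m}^n \binom{k}{m} - \sum_{k=m}^n (n-k)\binom{k}{m},\]
so each of the two summands in the claimed right-hand side will come from exactly one of these pieces.

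The first piece is handled by a single application of the hockey stick identity: placing the constant weight $n$ on every entry of the column gives $n$ times the unweighted column sum, which equals $n\binom{n+1}{m+1}$ by Theorem~\ref{thm:hockey stick}.

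For the second piece, I would interpret the weight $(n-k)$ on $\binom{k}{m}$ as counting that entry once for each index $l$ with $k < l \leq n$. Swapping the order of summation yields
\[\sum_{k=m}^n (n-k)\binom{k}{m} = \sum_{l=m+1}^n \sum_{k=m}^{l-1} \binom{k}{m}.\]
For each $l$, the inner sum is a hockey stick of length $l-m$ in the $m$-th column and collapses to $\binom{l}{m+1}$ by Theorem~\ref{thm:hockey stick}. A second application of the same identity collapses $\sum_{l=m+1}^n \binom{l}{m+1}$ to $\binom{n+1}{m+2}$. Combining the two pieces proves the theorem.

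The main obstacle is choosing the right decomposition of $k$. The arithmetically more natural split $k = m+(k-m)$, together with the identity $(k-m)\binom{k}{m} = (m+1)\binom{k}{m+1}$, leads to the alternative but equivalent form $m\binom{n+1}{m+1}+(m+1)\binom{n+1}{m+2}$, which does not literally match the claimed right-hand side. The split $k = n-(n-k)$ is tailored to reproduce the expression in the theorem, and once it is found, the rest of the argument is just a nested double application of the hockey stick, which fits neatly into the arrow-chasing paradigm of stacking hockey sticks of increasing length inside a single column.
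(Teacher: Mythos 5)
Your proof is correct, but it takes a genuinely different route from the paper's. The paper stays entirely inside the arrow-chasing formalism: it starts from the right-hand side, placing weight $n$ on $\binom{n+1}{m+1}$ and weight $-1$ on $\binom{n+1}{m+2}$, and propagates these weights upward with the weight rule through columns $m$, $m+1$ and $m+2$; the trailing $-1$ in column $m+2$ lowers the column-$(m+1)$ weight by one per row, so the entry $\binom km$ splits off with weight exactly $k$, and invariance of the weighted sum gives the identity in a single diagram. You instead write $k=n-(n-k)$, evaluate $n\sum_{k=m}^n\binom km=n\binom{n+1}{m+1}$ by one hockey stick, and compute $\sum_{k=m}^n(n-k)\binom km$ by a summation swap followed by two nested applications of Theorem~\ref{thm:hockey stick}; the swap is valid, both inner evaluations $\sum_{k=m}^{l-1}\binom km=\binom{l}{m+1}$ and $\sum_{l=m+1}^n\binom{l}{m+1}=\binom{n+1}{m+2}$ are exactly that theorem, and your side remark that the decomposition $k=m+(k-m)$ yields the equivalent form $m\binom{n+1}{m+1}+(m+1)\binom{n+1}{m+2}$ also checks out. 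What each approach buys: yours is a short verification that leans on an already-proved lemma and needs no new picture, and it fits the paper's spirit in that the hockey stick itself was proved by arrow-chasing; the paper's version is a single visual sweep that needs nothing beyond the weight rule, explains why precisely the two terms $n\binom{n+1}{m+1}$ and $-\binom{n+1}{m+2}$ appear (they are the weights required at the bottom to balance columns $m+1$ and $m+2$), and serves as the template for the generalization to weights $\binom kl$ in Theorem~\ref{thm:hockey gen}, whose alternating right-hand side would be considerably harder to guess from your decomposition.
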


\begin{proof}
	As illustrated in Figure~\ref{fig:Hockey variant} (left), we begin with the entries $\binom{n+1}{m+1}$ (with weight $n$) and $\binom{n+1}{m+2}$ (with weight $-1$). Using the weight rule, we can determine the weights of the entries in the $m$-th, $(m+1)$-st and $(m+2)$-nd column, as displayed below for $m=1$. Clearly, the weight of $\binom km$ is given by $k$, which establishes the claim. 
\end{proof}

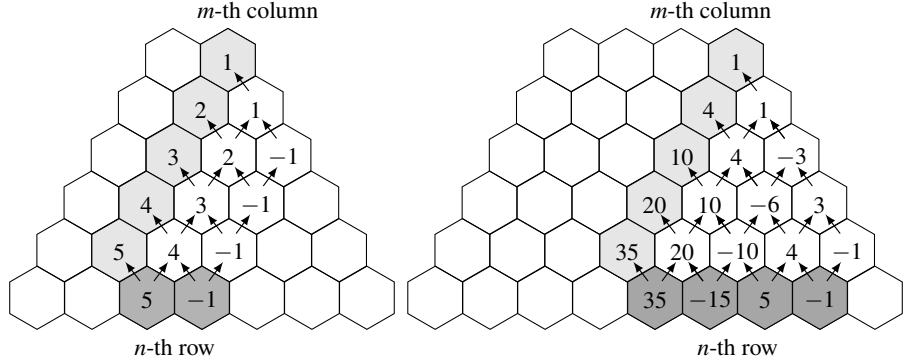
\begin{figure}[h!]
	\centering
	\begin{tikzpicture}[scale=0.84]
		\tikzset{hexagon/.style={draw, regular polygon, regular polygon sides=6, minimum size=0.84cm, inner sep=0pt, rotate=30}}
		\drawPascalTrap16
		\drawHexNode62{black!30}\drawHexNode63{black!30}
		\foreach \i in {1,2,3,4,5}{
			\drawHexNode{\i}{1}{black!10}\drawhexarrowsmultrev{\i}1011}
		\foreach \i in {2,3,4,5}{\drawhexarrowsmultrev{\i}2111}
		\foreach \i in {3,4,5}{\drawhexarrowsmultrev{\i}{3}101}
		\foreach \i in {1,2,3,4,5}{\hextext{\i}1{\i}}\hextext221\hextext322\hextext423\hextext524\hextext625
		\foreach \i in {3,4,5,6}{\hextext{\i}3{-1}}
		\hextext{-0.1}1{\text{$m$-th column}}
		\hextext73{\text{$n$-th row}}
	\end{tikzpicture}\:
			\begin{tikzpicture}[scale=0.84]
		\tikzset{hexagon/.style={draw, regular polygon, regular polygon sides=6, minimum size=0.84cm, inner sep=0pt, rotate=30}}
		\drawPascalTrap38
		\foreach \i in {4,5,6,7} {\drawHexNode8{\i}{black!35}}
		\foreach \i in {3,4,5,6,7}{\drawHexNode{\i}3{black!10}}
		\foreach \i in {4,5,6,7}{\drawhexarrowsmultrev{\i}4011}
		\foreach \i/\j in {4/4,5/4,5/5,6/4,6/5,6/6,7/4,7/5,7/6}{\drawhexarrowsmultrev{\i}{\j}111}
		\drawhexarrowsmultrev77101
		\foreach \i in {3,4,5,6,7}{\drawhexarrowsmultrev{\i}3011}
		\hextext331\hextext434\hextext53{10}\hextext63{20}\hextext73{35}
		\hextext441\hextext544\hextext64{10}\hextext74{20}\hextext84{35}
		\hextext55{-3}\hextext65{-6}\hextext75{-10}\hextext85{-15}
		\hextext663\hextext764\hextext865\hextext77{-1}\hextext87{-1}
		\hextext{1.9}2{\text{$m$-th column}}
		\hextext96{\text{$n$-th row}}
	\end{tikzpicture}
	\caption{Theorem \ref{thm:Hockey variant} for $m=1,n=5$ (left) and Theorem~\ref{thm:hockey gen} for $l=m=3,n=8$ (right)\label{fig:Hockey variant}}
\end{figure}

Now, let us generalize this further. Until now we have considered the weight sequences $1=\binom k0$ and $k=\binom k1$. If we consider more generally weights of the form $\binom kl$ for some fixed $l\in\mathbb N$, we get the following identity:

\begin{theorem}\label{thm:hockey gen}
	For all $n,m,l\in\mathbb N$ such that $l,m\leq n$ we have:
	$$\sum\limits_{k=\max\{l,m\}}^n\binom kl\binom km=\sum\limits_{k=0}^l(-1)^k\binom{n+1}{m+k+1}\binom{n-k}{l-k}$$
\end{theorem}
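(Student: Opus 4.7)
The plan is to follow the recipe of Theorem~\ref{thm:Hockey variant}: assign weights on row $n+1$ that encode the right-hand side, propagate them upward via the weight rule, and read off the column-$m$ contributions as the terms of the left-hand side. Concretely, I would place weight $(-1)^k\binom{n-k}{l-k}$ on the cell $\binom{n+1}{m+k+1}$ for $k=0,\ldots,l$, and weight $0$ everywhere else on row $n+1$, so that the weighted row sum of row $n+1$ is the right-hand side by construction.

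The central claim is that one application of the weight rule (from row $n+1$ up to row $n$) produces a column-$m$ weight of $\binom{n}{l}$ together with a \emph{shifted copy} of the same base configuration for the identity at $n-1$. More precisely, the new weight on column $m+i$ of row $n$ works out to $(-1)^{i-1}\binom{n-i}{l-i+1}$ for $i=1,\ldots,l+1$, which is exactly the base weight $(-1)^{k'}\binom{(n-1)-k'}{l-k'}$ with $k'=i-1$. The only algebra needed is the rearrangement $\binom{n-i+1}{l-i+1}-\binom{n-i}{l-i}=\binom{n-i}{l-i+1}$ of Pascal's rule, used to combine the two row-$(n+1)$ weights sitting above $\binom{n}{m+i}$. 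Since the weight rule preserves weighted sums, this self-similarity yields the recursion $\mathrm{RHS}(n)=\binom{n}{l}\binom{n}{m}+\mathrm{RHS}(n-1)$, and iterating it downward gives $\mathrm{RHS}(n)=\sum_{k=\max\{l,m\}}^{n}\binom{k}{l}\binom{k}{m}+\mathrm{RHS}(\max\{l,m\}-1)$; the trailing term vanishes because once $r<l$ all base weights $\binom{r-k}{l-k}$ are zero, and once $r<m$ every target cell $\binom{r+1}{m+k+1}$ vanishes.

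The main obstacle is the bookkeeping at the boundary column $m+l+1$, where the propagated weight $(-1)^l\binom{n-l}{0}$ has to be matched with the base weight $(-1)^l\binom{(n-1)-l}{0}$ of the shifted instance; the paper's convention that $\binom{N}{k}=0$ for $k>N$ is needed here, together with the observation that the affected cell $\binom{r}{m+l+1}$ vanishes in exactly the (few) cases where the two conventions disagree. Everything else is the same visual ripple already displayed in Figure~\ref{fig:Hockey variant} (right), which shows precisely this propagation for $l=m=3$, and the proof collapses to the $l=1$ argument of Theorem~\ref{thm:Hockey variant} in that special case.
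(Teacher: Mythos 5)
Your proposal is correct and is essentially the paper's own argument: the paper proves this theorem by the same recipe as Theorem~\ref{thm:Hockey variant} --- placing the right-hand-side weights on row $n+1$, propagating them upward with the weight rule, and letting column $m$ collect the weights $\binom{k}{l}$ --- but it only displays the case $l=m=3$ in Figure~\ref{fig:Hockey variant} (right) and explicitly leaves the general verification as an exercise to the reader. Your self-similarity computation (a single weight-rule step peels off $\binom{n}{l}\binom{n}{m}$ and reproduces the base configuration for $n-1$, the only algebra being Pascal's rule, with the convention mismatch at column $m+l+1$ harmless because that cell vanishes exactly when $n=l$) is a correct and complete way of doing the bookkeeping the paper omits.
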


For small values of $l$, the right-hand side is simpler than the left-hand side. 
The proof is essentially the same as the one of Theorem~\ref{thm:Hockey variant} but we need more columns to balance the weights. Hence, the right-hand side becomes a more complex alternating sum compared to Theorem~\ref{thm:Hockey variant}. The case $l=m=3$ and $n=8$ is depicted in Figure~\ref{fig:Hockey variant} (right).
We leave it as an exercise to our readers to figure out why this works in general.

We can also consider columns with weights given by another column in reverse order. This gives us a particularly nice identity that can be found in \cite[p. 148]{MR554488}, which Grinberg \cite{Grinberg} refers to as \textit{upside-down Chu-Vandermonde identity}. Using variable transformations we can also find it to be a special case of the \textit{Rothe-Hagen identity} \cite{MR75170}. 

\begin{theorem}[Upside-down Chu-Vandermonde identity]\label{thm:Vandermonde upside down}
	Let $l,m,n\in \mathbb N$ be natural numbers such that $l+m\leq n$. Then we have the following identity:
	$$\sum_{k=m}^{n-l}\binom km\binom{n-k}l=\binom{n+1}{l+m+1}$$
\end{theorem}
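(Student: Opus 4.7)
My plan is to mirror the arrow-chasing style of the paper and interpret the left-hand side as a weighted sum along the $m$-th column of Pascal's triangle: on the entry $\binom{k}{m}$ (for $m\le k\le n-l$) place the weight $\binom{n-k}{l}$. The weight sequence is then simply the $l$-th column of Pascal's triangle read bottom-up, from $\binom{n-m}{l}$ at the top of our column down to $\binom{l}{l}=1$ at the bottom. The goal is to collapse this weighted column sum into the single cell $\binom{n+1}{l+m+1}$.

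I would then argue by induction on $l$. For the base case $l=0$ all weights equal $\binom{n-k}{0}=1$, so the claim is exactly the hockey stick identity (Theorem~\ref{thm:hockey stick}) and the arrow-chasing proof of Figure~\ref{fig:hockey stick} applies verbatim. For the inductive step I apply Pascal's rule \emph{to each weight}, viewing $\binom{n-k}{l}$ as an entry of the $l$-th column and splitting it into the two entries one row above:
\begin{align*}
\binom{n-k}{l}=\binom{(n-1)-k}{l-1}+\binom{(n-1)-k}{l}.
\end{align*}
Performing this fork simultaneously on every cell of the weight column decomposes the weighted column sum into
\begin{align*}
\sum_{k=m}^{n-l}\binom{k}{m}\binom{(n-1)-k}{l-1}\;+\;\sum_{k=m}^{n-l}\binom{k}{m}\binom{(n-1)-k}{l}.
\end{align*}
The first sum matches the theorem with $(n,l)$ replaced by $(n-1,l-1)$ and therefore equals $\binom{n}{l+m}$ by induction. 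The second matches with $(n,l)$ replaced by $(n-1,l)$, once one observes that the extra term at $k=n-l$ vanishes because $\binom{l-1}{l}=0$; by induction it equals $\binom{n}{l+m+1}$. A concluding application of Pascal's rule combines these into $\binom{n}{l+m}+\binom{n}{l+m+1}=\binom{n+1}{l+m+1}$.

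The step I expect to require the most care is the bookkeeping of summation ranges after the split: after dropping $n$ to $n-1$ while keeping $l$ fixed, the canonical upper index becomes $n-l-1$ rather than $n-l$, and one must verify that the extra top term is forced to be zero. One should also handle the boundary case $l+m=n$ (or, equivalently, extend the identity trivially to $l+m>n$, where both sides vanish), since otherwise the second smaller instance would fall just outside the stated hypothesis $l+m\le n$. Apart from that, the argument is a clean recursive unfolding: one pair of Pascal arrows on the weight column per inductive step, until the weight sequence flattens to all $1$s and the hockey stick identity finishes the job.
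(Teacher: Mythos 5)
Your proof is correct, but it takes a genuinely different route from the paper's. The paper argues in the opposite direction and in one sweep: it places weight $1$ on the single cell $\binom{n+1}{l+m+1}$ and applies the weight rule upward, so that an inverted, shifted copy of Pascal's triangle grows out of this seed (exactly as in the proof of Theorem~\ref{thm:Lagrange}); the key observation is that the $(l+m+1-j)$-th column of that inverted triangle carries the $j$-th column of Pascal's triangle in reverse order, so stopping the sweep at the $m$-th column leaves precisely the weights $\binom{n-k}{l}$ there, and the left-hand side is read off directly, with no induction on $l$ and no appeal to the hockey stick identity. You instead start from the left-hand side and recursively fork each weight via Pascal's rule, $\binom{n-k}{l}=\binom{(n-1)-k}{l-1}+\binom{(n-1)-k}{l}$, reducing the instance $(n,l)$ to the instances $(n-1,l-1)$ and $(n-1,l)$ and grounding the recursion in Theorem~\ref{thm:hockey stick}. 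Your bookkeeping is sound: the stray term at $k=n-l$ vanishes because $\binom{l-1}{l}=0$, and extending the identity trivially to $l+m>n$ (where both sides vanish) covers the boundary case $l+m=n$. One labeling correction: since the branch $(n-1,l)$ keeps $l$ fixed, this is not an induction on $l$ alone; phrase it as induction on $n$ (or lexicographic induction on $(l,n)$), under which both branches are legitimately smaller. As for what each approach buys: yours is a fully explicit reduction to an already-proved theorem, every step being a checkable identity between finite sums, and it handles boundary terms rigorously; the paper's single diagram captures the whole identity at once and explains where the reversed-column weights come from, though its claim about the column-by-column pattern is itself an implicit induction that your argument, in effect, makes precise.
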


Displayed is the case $n=7, m=1$ and $l=3$.

\begin{proof}
We start with the right-hand side, i.e., with $\binom{n+1}{l+m+1}$ with weight $1$ (see Figure~\ref{fig:Vandermonde upside down}, marked dark gray). Now, we use the weight rule and work our way upward. As in the proof of Theorem~\ref{thm:Lagrange}, this generates  an inverted (and shifted) version of Pascal's triangle in rows $n+1$ through $n-l+1$.  However, now we continue to move further upward, but we stop on the left, whenever we hit the $m$-th column (marked light gray). By construction, the $(l+m+1-j)$-th column obtains weights given by the $j$-th column in reverse order. Hence, the $(m+1)$-st column obtains the reversed weights of the $l$-th column, and so does the $m$-th column, since we do not apply the weight rule to our final column anymore. This gives us the left-hand side of our identity.

	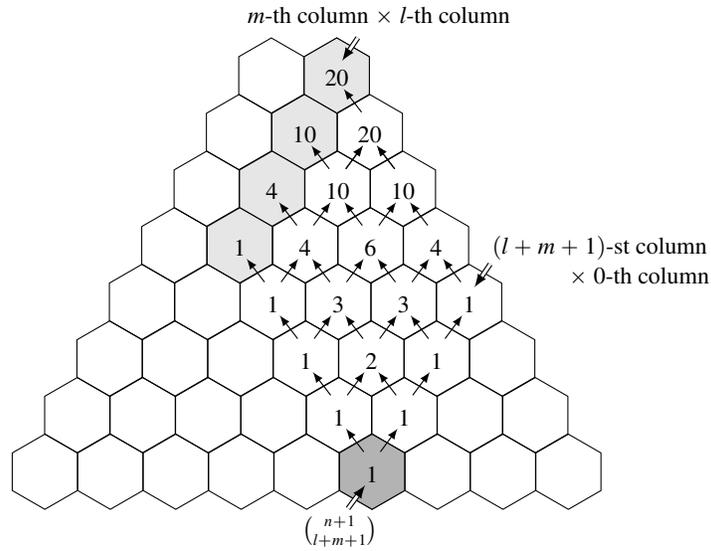
\begin{figure}[h!]\centering
		\begin{tikzpicture}
			\tikzset{hexagon/.style={draw, regular polygon, regular polygon sides=6, minimum size=1cm, inner sep=0pt, rotate=30}}
			
			\drawPascalTrap18
			\foreach \i in {1,2,3,4}{
				\drawHexNode{\i}1{black!10}}
			\drawHexNode85{black!30}
			\foreach \i in {2,3,4}{
				\foreach \j in {2,3,...,\i} {
					\ifnum \i=2
					\drawhexarrowsmultrev{\i}2111
					\else
					\drawhexarrowsmultrev{\i}{\j}111\fi}}
			\foreach\i/\j in {5/3,5/4,6/4}{
				\drawhexarrowsmultrev{\i}{\j}111}
			\foreach \i in {1,2,3,4}{
				\drawhexarrowsmultrev{\i}1011}
			\foreach \i/\j in {5/2,6/3,7/4}{\drawhexarrowsmultrev{\i}{\j}011}
			\foreach \i/\j in {5/5,6/5,7/5}{\drawhexarrowsmultrev{\i}{\j}101}
			\hextext11{20}\hextext21{10}\hextext22{20}\hextext31{4}\hextext32{10}\hextext33{10}\hextext411\hextext424\hextext43{6}\hextext44{4}\hextext521\hextext533\hextext543\hextext55{1}\hextext63{1}\hextext64{2}\hextext65{1}\hextext74{1}\hextext75{1}\hextext85{1}
			\hextext{-0.1}{1.1}{\text{$m$-th column $\times$ $l$-th column}}
			\drawhexarrowsmultrev85102
			\hextext4{6.5}{\text{$(l+m+1)$-st column}}
			\hextext{4.5}{7.35}{\text{$\times$ $0$-th column}}
			\drawhexarrowsmult45102\drawhexarrowsmult01102
			\hextext95{\binom{n+1}{l+m+1}}
		\end{tikzpicture}\caption{Proof of Theorem \ref{thm:Vandermonde upside down} for $n=7,m=1$ and $l=3$\label{fig:Vandermonde upside down}}
	\end{figure}
\end{proof}

While weights of the form $(-1)^k$ give a nice result for weighted row sums, the corresponding weighted column sum does not yield a similarly nice formula. Instead, we consider weights of the form $2^{n-k}$. As we will see, in this case we get a nicer result if we consider two intersecting columns rather than one column. 

\begin{theorem}[Boscarol's identity]
	For all $m,n\in \mathbb N$ such that $m\leq n$ we have:
		$$\sum\limits_{k=m}^n\binom km2^{n-k}+\sum\limits_{k=n-m}^n\binom k{n-m}2^{n-k}=2^{n+1}$$
\end{theorem}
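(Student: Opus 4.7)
My approach is to reduce Boscarol's identity to two applications of the upside-down Chu-Vandermonde identity (Theorem \ref{thm:Vandermonde upside down}), combined with the row-sum formula (Theorem \ref{thm:row sum}) and the symmetry of Pascal's triangle. The key observation is that each of the two weighted column sums on the left-hand side is secretly a partial row sum of the $(n+1)$-st row, and the two partial sums together exhaust that row.

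First, I would rewrite each factor $2^{n-k}$ appearing in the first column sum as a full row sum via Theorem \ref{thm:row sum}, namely $2^{n-k}=\sum_{i=0}^{n-k}\binom{n-k}{i}$, and then interchange the order of the resulting double sum so that $i$ becomes the outer index:
\begin{align*}
\sum_{k=m}^n\binom{k}{m}2^{n-k}=\sum_{i=0}^{n-m}\sum_{k=m}^{n-i}\binom{k}{m}\binom{n-k}{i}.
\end{align*}
By the upside-down Chu-Vandermonde identity (applied with $l=i$), each inner sum collapses to $\binom{n+1}{m+i+1}$, so the first column sum equals the partial row sum $\sum_{j=m+1}^{n+1}\binom{n+1}{j}$.

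The same reasoning applied to the second column sum yields $\sum_{j=n-m+1}^{n+1}\binom{n+1}{j}$, which by the symmetry $\binom{n+1}{j}=\binom{n+1}{n+1-j}$ of Pascal's triangle equals $\sum_{j=0}^{m}\binom{n+1}{j}$. Adding the two partial row sums thus covers the full $(n+1)$-st row, whose total is $2^{n+1}$ by Theorem \ref{thm:row sum}, which closes the argument.

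The main obstacle, in my view, is that a purely direct visual arrow-chasing picture seems elusive: the weights $2^{n-k}$ live along a column, whereas the weight rule operates row-by-row, so there is no obvious way to march the column weights upward into the $(n+1)$-st row in one sweep. The indirection through upside-down Chu-Vandermonde is exactly what translates the column-indexed sums on the left into row-indexed sums, at which point Theorem \ref{thm:row sum} finishes the job. A more vivid single-diagram proof showing the two weighted columns interlocking directly into row $n+1$ via arrows would be aesthetically preferable, but composing the three previously established results appears to be the cleanest route.
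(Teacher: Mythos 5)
Your proof is correct, but it takes a genuinely different route from the paper. You reduce Boscarol's identity to previously established results: you expand each weight $2^{n-k}$ as a row sum via Theorem~\ref{thm:row sum}, interchange the order of summation, collapse each inner sum $\sum_{k=m}^{n-i}\binom km\binom{n-k}{i}=\binom{n+1}{m+i+1}$ by the upside-down Chu--Vandermonde identity (Theorem~\ref{thm:Vandermonde upside down}), and observe that the two resulting partial sums $\sum_{j=m+1}^{n+1}\binom{n+1}{j}$ and (after symmetry) $\sum_{j=0}^{m}\binom{n+1}{j}$ tile the full $(n+1)$-st row. All steps check out, including the range conditions $i+m\leq n$ needed to invoke Theorem~\ref{thm:Vandermonde upside down}. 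The paper, by contrast, gives exactly the single-diagram arrow-chasing proof you deemed elusive, and its trick is worth noting: rather than pushing the column weights into a row, it runs the weight rule in the other direction. It interprets $2^{n+1}$ as the $n$-th row with every entry weighted $2$, reflects the column starting at $\binom{n-m}{n-m}$ into the diagonal starting at $\binom{n-m}{0}$ by symmetry (so the two columns intersect at $\binom nm$), and then moves upward from the $n$-th row; at each step two equal weights $2^k$ merge into $2^{k+1}$, and the weights caught on the two bounding diagonals are precisely the terms $\binom km 2^{n-k}$ and $\binom k{n-m}2^{n-k}$. Your approach buys modularity and a fully algebraic verification built from the paper's earlier theorems (in fact it shows Boscarol's identity is a formal consequence of upside-down Chu--Vandermonde plus row sums); the paper's approach buys self-containedness and the visual ``why'' that is the point of the arrow-chasing method, at the cost of a more delicate bookkeeping of how weights accumulate along the diagonals.
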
 

This result is in fact just a reformulation of a theorem due to Boscarol which was first proved in 1982 (see \cite{MR668243}). The proof is depicted in Figure~\ref{fig:Boscarol}.
		\begin{figure}[h!]\centering
	\begin{tikzpicture}[scale=1]
		\tikzset{hexagon/.style={draw, regular polygon, regular polygon sides=6, minimum size=1cm, inner sep=0pt, rotate=30}}	\drawPascalTrap37
		\foreach \i in {0,1,2,4,5,6,7} {\drawHexNode7{\i}{black!25}
			\hextext7{\i}{2}}
		\foreach \i/\j in {4/0,5/1,6/2,7/3,6/3,5/3,4/3,3/3}{\drawHexNode{\i}{\j}{black!10}}
		\foreach \i/\j in {5/0,6/0,6/1,4/4,5/4,5/5,6/4,6/5,6/6}{\drawhexarrowsmultrev{\i+0.07}{\j+0.07}111}
		\foreach \i/\j in {4/0,5/1,6/2}{\drawhexarrowsmultrev{\i+0.07}{\j+0.07}101}
		\foreach \i/\j in {3/3,4/3,5/3,6/3}{\drawhexarrowsmultrev{\i}{\j}011}
\hextext604\hextext614\hextext622\hextext632\hextext644\hextext654\hextext664
\hextext508\hextext514\hextext534\hextext548\hextext558\hextext408\hextext43{8}\hextext44{16}\hextext33{16}
		\hextext73{2\cdot1}
		\hextext7{8.5}{\text{$n$-th row}}
		\hextext{2.9}{-1.7}{\binom{n-m}0=\binom40}\drawhexarrowsmult3{-1}012
		\hextext{2.9}{4.5}{\binom mm=\binom33}\drawhexarrowsmulthor34102
		\hextext8{3}{\binom nm=\binom73}\drawhexarrowsmultrev73102
	\end{tikzpicture}
	\caption{Proof of Boscarol's identity\label{fig:Boscarol} for $m=3$ and $n=7$}
\end{figure}
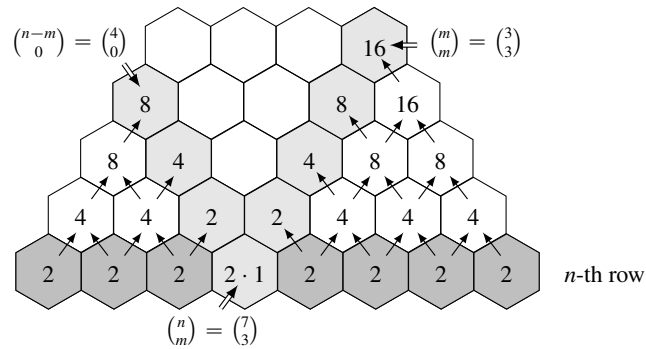

\begin{proof}
By Theorem~\ref{thm:row sum}, the sum of the entries in the \( n \)-th row of Pascal’s triangle is \( 2^n \). Thus, we can interpret the right-hand side of the identity as a weighted row sum, where each entry has weight $2$ (see Figure~\ref{fig:Boscarol}, marked dark gray). The left-hand side corresponds to the sum of two weighted columns: one starting at \( \binom{m}{m} \), and the other at \( \binom{n - m}{n - m} \). Using the symmetry of Pascal’s triangle, we reflect the second column to start at \( \binom{n - m}{0} \) and proceed downward to the right. Both columns then intersect in the \( n \)-th row at \( \binom{n}{m} \) (see Figure~\ref{fig:Boscarol}, marked light gray). Now, we apply the weight rule to move upward from the \( n \)-th row until we reach the diagonals. At each step, this corresponds to adding two weights of the form $2^k$, which results in a weight of $2^k+2^k=2^{k+1}$. Clearly, we obtain the left-hand side.
\end{proof}


Finally, we will choose binomial coefficients of the $n$-th row of Pascal's triangle as weights of the entries of a column. 

\begin{theorem}\label{thm:hor}
	For all $n\in\mathbb N$ we have:
	$$\sum\limits_{k=n}^{2n}(-1)^k\binom kn\binom n{k-n}=1$$
\end{theorem}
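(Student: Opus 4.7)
The plan is to view the left-hand side as a weighted row sum of the $n$-th row and to reduce it, via $n$ upward applications of the weight rule, to a single entry in the $0$-th row carrying weight $1$.

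First I would substitute $j=k-n$, which turns the identity into $\sum_{j=0}^{n}(-1)^{n+j}\binom{n+j}{n}\binom{n}{j}=1$. Placing the weight $w_j^{(0)}=(-1)^{n+j}\binom{n+j}{n}$ on the entry $\binom{n}{j}$ of the $n$-th row, the left-hand side becomes the weighted row sum $\sum_{j=0}^{n}w_j^{(0)}\binom{n}{j}$, which is exactly the setting in which the weight rule applies.

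Next I would verify that this family of weights is preserved in a very clean way as we move upward. By Pascal's rule, $\binom{n+j+1}{n}=\binom{n+j}{n}+\binom{n+j}{n-1}$, so a one-line computation gives $w_j^{(0)}+w_{j+1}^{(0)}=(-1)^{n+j+1}\binom{n+j}{n-1}$. The same calculation shows that, if we set $w_j^{(r)}=(-1)^{n+j+r}\binom{n+j}{n-r}$, then $w_j^{(r)}+w_{j+1}^{(r)}=w_j^{(r+1)}$. Hence, by induction on $r$, after $r$ upward applications of the weight rule the weight on $\binom{n-r}{j}$ is precisely $w_j^{(r)}$. This is the analogue of the ``self-similar'' weight patterns observed in the proofs of Theorems~\ref{thm:Lagrange} and~\ref{thm:Alternating binomial row sum}.

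Applying this $n$ times brings us to the $0$-th row, whose only entry is $\binom{0}{0}=1$, carrying weight $w_0^{(n)}=(-1)^{2n}\binom{n}{0}=1$. Since the weight rule preserves the total weighted sum, the original left-hand side equals $1$. The main (and essentially only) content of the proof is the weight-invariance computation, which is a one-line application of Pascal's rule; a figure in the style of those for Theorems~\ref{thm:Lagrange} and~\ref{thm:Alternating binomial row sum} -- showing the $n$-th row with weights $(-1)^{n+j}\binom{n+j}{n}$ and arrows pointing successively upward through triangles that shrink one entry per row -- would render the argument fully visual.
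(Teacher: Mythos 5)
Your proof is correct: the substitution $j=k-n$, the invariance computation $w_j^{(r)}+w_{j+1}^{(r)}=w_j^{(r+1)}$ (a one-line application of Pascal's rule), and the endpoint evaluation $w_0^{(n)}=(-1)^{2n}\binom{n}{0}=1$ all check out, and the ordinary weight rule does preserve the weighted row sum at each step, so the left-hand side equals $1$. However, this is not the route the paper takes as its primary proof: there, the right-hand side is read as $\binom{n}{0}$, the leftmost entry of the $n$-th row, and a \emph{subtractive} weight rule (based on $\binom{n}{k}=\binom{n+1}{k+1}-\binom{n}{k+1}$) is used to move \emph{column-wise} to the right, so that the weights in the $l$-th column are the alternating entries of the $l$-th row; after $n$ such steps the $n$-th column carries exactly the weights $(-1)^k\binom{n}{k-n}$ on the entries $\binom{k}{n}$, i.e.\ the left-hand side in its original form, with no substitution needed. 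Your argument is instead precisely the \emph{alternative} proof that the paper only sketches in its closing remark on Theorem~\ref{thm:hor} (Figure~\ref{fig:hor}, right): after rewriting the sum as a weighted row sum, the weights form a rotated alternating copy of Pascal's triangle that shrinks, via the ordinary additive weight rule, down to the single weight $1$ at the apex -- the same mechanism as in the proofs of Theorems~\ref{thm:Lagrange} and~\ref{thm:Alternating binomial row sum} (your weights are the mirror image of those in the figure, which is immaterial by symmetry). What your route buys is self-containedness and explicitness: it needs only the weight rule already established earlier, and you spell out the closed form of the weights at every level, which the paper leaves implicit in a picture. What the paper's main route buys is a new tool, the column-wise subtractive weight rule, which it then reuses -- by shifting the same picture -- to obtain the more general Knuth identity $\sum_{k=p}^{p+n}(-1)^{p+n-k}\binom{k}{m}\binom{n}{k-p}=\binom{p}{m-n}$.
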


To prove Theorem~\ref{thm:hor}, we will use a subtractive version of Pascal’s rule, along with a modified weight rule (see Figure~\ref{fig:sub Pascal}). Specifically, Pascal’s rule becomes  
\[
\binom{n}{k} = \binom{n+1}{k+1} - \binom{n}{k+1}.
\]  
Under the subtractive weight rule, the weight of an entry is obtained by subtracting the weight of the entry to its left from the weight of the entry diagonally above to the left. 
This has the effect that the weighted column sum always remains the same.
 As before, the weight rule applies to entire columns; however, since columns are infinite, we restrict attention to finite segments bounded by zero weights.

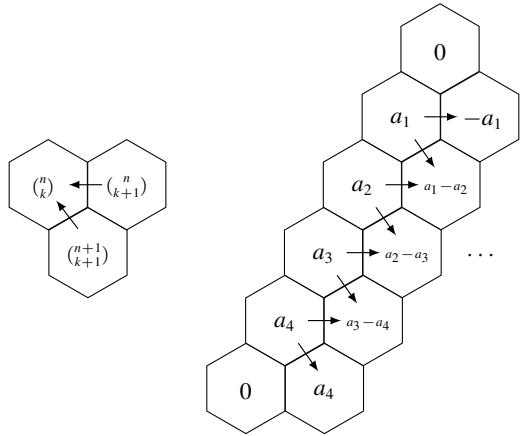
\begin{figure}[h!]\centering
	\begin{tikzpicture}[scale=1.2]
		\tikzset{hexagon/.style={draw, regular polygon, regular polygon sides=6, minimum size=1.2cm, inner sep=0pt, rotate=30}}
		\drawHexNode2{-3}{white}\drawHexNode1{-4}{white}\drawHexNode1{-3}{white}\hextext2{-3}{\text{\scriptsize{$\binom {n+1}{k+1}$}}}\hextext1{-4}{\text{\scriptsize{$\binom n{k}\;\;$}}}\hextext1{-3}{\text{\scriptsize{$\binom n{k+1}$}}}\drawhexarrowsmulthor1{-2.97}101\drawhexarrowsmultrev1{-4}011
		
		\foreach \i/\j in {-1/0,0/0,1/0,2/0,3/0, 4/0,0/1,1/1,2/1,3/1,4/1}{\drawHexNode{\i}{\j}{white}}
		\hextext{-1}00\hextext00{a_1}\hextext10{a_2}\hextext20{a_3}\hextext30{a_4}\hextext400
		\hextext41{a_4}\hextext31{\text{\tiny{\;\;\;$a_3\!-\!a_4$}}}\hextext21{\text{\tiny{\;\;\;$a_2\!-\!a_3$}}}\hextext11{\text{\tiny{\;\;\;$a_1\!-\!a_2$}}}\hextext01{\;-a_1}\hextext22{\cdots}\drawhexarrowsmulthor3{-0.05}011
		\drawhexarrowsmulthor2{-0.05}011\drawhexarrowsmulthor1{-0.05}011\drawhexarrowsmulthor0{-0.05}011
		\foreach \i in {0,1,2,3}{\drawhexarrowsmult{\i}0011}
	\end{tikzpicture}
	\caption{Subtractive version of Pascal's rule (left) and the weight rule (right)\label{fig:sub Pascal}}
\end{figure}

\begin{proof}[Proof of Theorem~\ref{thm:hor}]
We interpret the right-hand side as \( 1 = \binom{n}{0} \), that is, as the leftmost entry in the \( n \)-th row of Pascal’s triangle. Then, using the subtractive weight rule, we move column-wise to the right (see Figure~\ref{fig:hor}, left) so that the column sums remains invariant. By the subtractive weight rule, the weights in the \( l \)-th column correspond to the alternating entries of the \( l \)-th row of Pascal's triangle. Thus, in the \( n \)-th column, we recover the alternating entries of the \( n \)-th row, which matches the left-hand side of the identity.
\end{proof}

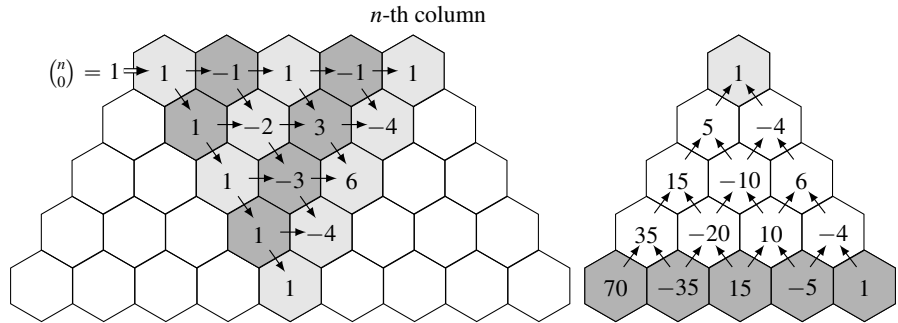
\begin{figure}[h!]\centering
\begin{tikzpicture}[scale=0.95]
	\tikzset{hexagon/.style={draw, regular polygon, regular polygon sides=6, minimum size=0.95cm, inner sep=0pt, rotate=30}}
	\drawPascalTrap48
	\foreach \i/\j in {4/0,4/2,5/2,6/2,4/4,5/4,6/4,7/4,8/4}{\drawHexNode{\i}{\j}{black!10}}
	\foreach \i/\j in {4/1,5/1,4/3,5/3,6/3,7/3}{\drawHexNode{\i}{\j}{black!30}}
	\foreach \i/\j in {4/0,4/1,4/2,4/3,5/1,5/2,5/3,6/2,6/3,7/3}{\drawhexarrowsmulthor{\i}{\j}011\drawhexarrowsmult{\i}{\j}011}
	\hextext401\hextext41{-1}\hextext421\hextext43{-1}\hextext441\hextext511\hextext52{-2}\hextext533\hextext54{-4}\hextext621\hextext63{-3}\hextext646\hextext731\hextext74{-4}\hextext841
	\hextext4{-1.3}{\binom n0=1}
	\drawhexarrowsmulthor4{-1}012
	\hextext{2.9}{3.7}{\text{$n$-th column}}
\end{tikzpicture}\;
\begin{tikzpicture}[scale=0.95]
	\tikzset{hexagon/.style={draw, regular polygon, regular polygon sides=6, minimum size=0.95cm, inner sep=0pt, rotate=30}}
	\drawPascalTriangle4
	\foreach \i in {0,1,2,3,4}{\drawHexNode4{\i}{black!30}}
	\drawHexNode00{black!10}
	\foreach \i/\j in {0/0,1/0,1/1,2/0,2/1,2/2,3/0,3/1,3/2,3/3}{\drawhexarrowsmultrev{\i}{\j}111}
	\hextext40{70}\hextext41{-35}\hextext42{15}\hextext43{-5}\hextext44{1}\hextext30{35}\hextext31{{-20}}\hextext32{10}\hextext33{-4}\hextext20{15}\hextext21{-10}\hextext226\hextext105\hextext11{-4}\hextext001
\end{tikzpicture}\caption{Proof of Theorem~\ref{thm:hor}\label{fig:hor} for $n=4$ as a weighted column sum (left) and weighted row sum (right)}
\end{figure}

	By shifting the rotated alternating version of Pascal's triangle in Figure \ref{fig:hor} (left), this proof can easily be generalized in order to obtain the identity
$$\sum\limits_{k=p}^{p+n}(-1)^{p+n-k}\binom km\binom n{k-p}=\binom{p}{m-n},$$
which was formulated by Knuth \cite{MR3077152} and recently proven combinatorially in \cite{MR4852380}.

Note that Theorem~\ref{thm:hor} can be equivalently stated as  
\[
\sum\limits_{k=0}^n(-1)^k\binom{n}{k}\binom{2n-k}{n} = 1.
\]  
In this form, the left-hand side can be interpreted as a weighted row sum in Pascal’s triangle. This perspective leads to an alternative proof of the identity, which is depicted in Figure~\ref{fig:hor} (right). This proof uses essentially the same idea as the proof of Theorem~\ref{thm:Alternating binomial row sum}.

	\section{Directions for further explorations}
So far, we have presented selected examples that illustrate our technique. However, there are many more applications that we plan to explore in future articles \cite{Krapf1,Krapf2}.

First, there is great flexibility in choosing weight sequences -- for both weighted row sums and weighted column sums. Arrow-chasing is particularly effective when the weights follow a recursive definition. A notable example is the Fibonacci sequence. For instance, using arrow-chasing, we can show \cite{Krapf1}:
\[
\sum\limits_{k=0}^n \binom{n}{k} F_k = F_{2n}.
\]
In fact, as we show in \cite{Krapf1}, arrow-chasing even enables us to discover new identities.
Fibonacci weights also lead to elegant identities for weighted column sums. One striking example, recently posed as a problem in the \textit{Fibonacci Quarterly} \cite{FibQuart}, is:
\[
\sum\limits_{k=0}^n \binom{m}{k} (-1)^{n+k} F_{m - 2k}
= 
\sum\limits_{k=n}^{m-1} \binom{k}{n} F_{k - 2n - 1}.
\]
We leave it as a challenge to the reader to find an arrow-chasing proof.

Second, while we have focused on weighted row and column sums, one can also consider {weighted diagonal sums}. If all weights are 1, we obtain the Fibonacci sequence. Choosing other weight sequences yields many other recursive sequences, whose properties can be shown via arrow-chasing \cite{Krapf2}.

Third, arrow-chasing is not limited to Pascal’s triangle. It also applies to other triangular arrays. A particularly rich example is the {Hosoya triangle}, which allows for proving many Fibonacci identities. But many more structures are waiting to be explored: the Bell triangle, the Stirling triangle, the Leibniz triangle, and others. We hope that our readers will be inspired to explore these fascinating triangles.

	\section{Conclusion}
	In this paper, we introduced a novel technique for constructing proofs via arrow-chasing in Pascal's triangle. This approach offers a variety of new visual proofs for many well-known summation formulas involving binomial coefficients. Since the technique relies solely on Pascal's rule and the corresponding rule for weights, our arguments can be converted into algebraic proofs. However, one significant advantage of this method is its accessibility: it requires only basic arithmetic, making it approachable for a wide audience. Moreover, unlike many algebraic proofs, arrow-chasing not only proves an identity but also provides intuitive insight into \textit{why} the identity holds. While combinatorial proofs (such as those presented in \cite{MR1997773}) also have this explanatory aspect, our method is often simpler, especially when dealing with alternating sums. 
	

	\bibliographystyle{vancouver}
	\bibliography{References.bib}

\begin{thebibliography}{10}

\bibitem{MR3654845}
Plaza A.
\newblock Proof without words: sum of a row in {P}ascal's triangle.
\newblock College Math J. 2017;48(3):188.

\bibitem{MR4153216}
Plaza A.
\newblock Half row sums in {P}ascal's triangle.
\newblock Math Mag. 2020;93(4):308.

\bibitem{MR2768578}
Koshy T.
\newblock Triangular arrays with applications.
\newblock Oxford University Press, Oxford; 2011.

\bibitem{MR1390415}
Jones CH.
\newblock Generalized hockey stick identities and {$N$}-dimensional
  blockwalking.
\newblock Fibonacci Quart. 1996;34(3):280-8.

\bibitem{MR554488}
Riordan J.
\newblock Combinatorial identities.
\newblock Robert E. Krieger Publishing Co., Huntington, NY; 1979.
\newblock Reprint of the 1968 original.

\bibitem{Grinberg}
Grinberg D.
\newblock Math 222: Enumerative Combinatorics, Fall 2022 (Lecture notes); 2022.
\newblock Available from:
  \url{https://www.cip.ifi.lmu.de/~grinberg/t/22fco/lec15.pdf}.

\bibitem{MR75170}
Gould HW.
\newblock Some generalizations of {V}andermonde's convolution.
\newblock Amer Math Monthly. 1956;63:84-91.

\bibitem{MR668243}
Boscarol M.
\newblock A property of binomial coefficients.
\newblock Fibonacci Quart. 1982;20(3):249-51.

\bibitem{MR3077152}
Knuth DE.
\newblock The art of computer programming. {V}ol. 1.
\newblock 3rd ed. Addison-Wesley, Reading, MA; 1997.
\newblock Fundamental algorithms.

\bibitem{MR4852380}
Mateer T.
\newblock A {C}ombinatorial {P}roof of a {B}inomial {C}oefficient {I}dentity.
\newblock College Math J. 2025;56(1):65-7.

\bibitem{Krapf1}
Krapf R, Heller H. Visual proofs of binomial Fibonacci identities; 2025.
\newblock Submitted.

\bibitem{Krapf2}
Fischer A, Heller H, Krapf R. A visual approach to weighted diagonal sums in
  Pascal's triangle; 2025.
\newblock In preparation.

\bibitem{FibQuart}
Ohtsuka H.
\newblock Problem B-1347.
\newblock Fibonacci Quart. 2024;62(2):173.

\bibitem{MR1997773}
Benjamin AT, Quinn JJ.
\newblock Proofs that really count. vol.~27 of The Dolciani Mathematical
  Expositions.
\newblock Mathematical Association of America, Washington, DC; 2003.
\newblock The art of combinatorial proof.

\end{thebibliography}


\end{document}